\numberwithin{equation}{section}
\theoremstyle{plain}
\newtheorem{theorem}[equation]{Theorem}
\newtheorem{lemma}[equation]{Lemma}
\newtheorem{proposition}[equation]{Proposition}
\theoremstyle{definition}
\newtheorem{definition}[equation]{Definition}
\theoremstyle{remark}
\newcommand{\supp}{\operatorname{supp}}
\newcommand{\dist}{\operatorname{dist}}
\newcommand{\loc}{\operatorname{loc}}
\newcommand{\tr}{\operatorname{tr}}
\newcommand{\bg}{\operatorname{big}}
\newcommand{\buffer}{\operatorname{buffer}}
\newcommand{\bad}{\operatorname{Bad}}
\newcommand{\good}{\operatorname{Good}}
\newcommand{\I}{\operatorname{I}}
\newcommand{\II}{\operatorname{II}}
\newcommand{\III}{\operatorname{III}}
\newcommand{\IV}{\operatorname{IV}}
\newcommand{\error}{\operatorname{Error}}
\begin{document}

\title{A proof of the local $Tb$ Theorem for standard
Calder\'{o}n-Zygmund operators}

\author[S. Hofmann]{Steve Hofmann}
\address{Department of Mathematics, 
University of Missouri, Columbia, Missouri 65211, USA}
\email{hofmann@math.missouri.edu}
\thanks{The author was supported by the National Science Foundation}

\begin{abstract} 
We give a proof of a so-called ``local $Tb$" Theorem for singular integrals whose kernels satisfy the
standard Calder\'on-Zygmund conditions.  The present theorem, which extends an earlier result of
M. Christ \cite{Ch}, was proved in \cite{AHMTT} for ``perfect dyadic" Calder\'on-Zygmund operators.
The proof in \cite{AHMTT} essentially carries over to the case considered here, with some technical adjustments.
\end{abstract}

\maketitle

\section{Introduction\label{s1}}

Following Coifman and Meyer, we say that an operator $T$, initially 
defined as a mapping from test functions $C_0^\infty(\mathbb{R}^n)$ to distributions, is a singular integral operator if
it is associated to a kernel $K(x,y)$ in the sense that for all 
$\phi,\psi \in C_0^\infty$ with disjoint supports, we have
$$\langle T\phi,\psi\rangle = \iint_{\mathbb{R}^n \times \mathbb{R}^n}K(x,y)\phi(y)\psi(x)dydx,$$
and if the kernel satisfies the standard ``Calder\'on-Zygmund" bounds
 \begin{subequations}\label{eq6.3}
\begin{equation}\label{eq6.3a}|K(x,y)|\leq \frac{C}{|x-y|^n}\end{equation}
\begin{equation}\label{eq6.3b} |K(x,y+h)-K(x,y)|+|K(x+h,y)-K(x,y)|\leq C\frac{|h|^\alpha}{|x-y|^{n+\alpha}},\end{equation}\end{subequations}
where the later inequality holds for some $\alpha >0$ whenever $|x-y|>2 |h|$.

For future reference,  we note that, for any kernel $K(x,y)$ satisfying \eqref{eq6.3}(a), and for
$1<p<\infty$, we have
\begin{equation}\label{eq6.13} \int_Q \left| \int K(x,y)1_{6Q\backslash Q}(y)f(y)dy\right|^p dx\leq C_p\int_{6Q\backslash
Q} |f|^p.\end{equation}
We omit the proof.

The following theorem is an extension of a local Tb Theorem for singular
integrals introduced by M. Christ \cite{Ch} in connection with the theory of analytic capacity. 
See also \cite{NTV}, where a non-doubling versions of Christ's local $Tb$ Theorem
is given.  A 1-dimensional
version of the present result, valid for ``perfect dyadic" Calder\'on-Zygmund kernels, appears in \cite{AHMTT}.  In the sequel, we use the notation $T^{tr}$ to denote the transpose of the operator
$T$.

\begin{theorem}\label{t6.6} Let $T$ be a singular integral operator associated to a kernel $K$ satisfying
\eqref{eq6.3}, and suppose that $K$ satisfies the generalized truncation condition $K(x,y)\in L^\infty
(\mathbb{R}^n\times \mathbb{R}^n)$. Suppose also that there exist pseudo-accretive systems $\{ b^1_Q\}$, $\{ b^2_Q\}$
such that $b^1_Q$ and  $b^2_Q$ are supported in $Q$, and
\begin{enumerate}\item[(i)] \qquad$\int_Q \left(|b^1_Q|^q+|b^2_Q|^q\right)\leq C|Q|$, for some $q>2$
\item[(ii)] \qquad$\int_Q \left(|Tb^1_Q|^2 + |T^{\tr} b^2_Q |^2\right)\leq C|Q|$
\item[(iii)]\qquad $\frac{1}{C}|Q|\leq \min\left(\Re e \int_Q b^1_Q,
\Re e \int_Qb^2_Q\right)$.\end{enumerate}
Then $T:L^2(\mathbb{R}^n)\to L^2(\mathbb{R}^n)$, with bound independent of $\| K\|_\infty $.\end{theorem}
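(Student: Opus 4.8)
The plan is to adapt the argument of \cite{AHMTT} to the present generality; the one structural change is that the \emph{exact} orthogonality of martingale differences under a perfect dyadic operator must be replaced by the \emph{quasi}-orthogonality afforded by the H\"older regularity \eqref{eq6.3b}. The generalized truncation hypothesis $K\in L^\infty$ is used only qualitatively: it makes $T$ a genuine bounded operator on $L^2$ and renders all the integrals below — the pairings $\langle Tb^1_Q,\cdot\rangle$, the adapted square functions, the double sums — absolutely convergent, so that the formal manipulations are legitimate; since the bound we produce will not depend on $\|K\|_\infty$, a routine limiting argument (approximating a general standard kernel by bounded ones) then removes the truncation. By the symmetry of the hypotheses under $(T,\{b^1_Q\})\leftrightarrow(T^{\tr},\{b^2_Q\})$, together with duality and density, it suffices to prove $|\langle Tf,g\rangle|\le C\,\|f\|_2\,\|g\|_2$ for $f,g$ in a dense class, with $C$ depending only on $n$, $\alpha$, $q$ and the constant in (i)--(iii). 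I would fix a dyadic grid and a large dyadic cube $Q_0$, keep all estimates uniform in $Q_0$, and let $Q_0\uparrow\mathbb{R}^n$ at the end (edge effects of the single grid handled in the usual way).

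Next comes the corona/stopping-time construction. With $b:=b^1_{Q_0}$, select the maximal dyadic $Q'\subsetneq Q_0$ at which the good behaviour of $b$ first breaks down, i.e.\ $\Re e\int_{Q'}b<\varepsilon|Q'|$ or $\int_{Q'}|b|^q>M|Q'|$; iterating inside each selected cube partitions the dyadic subcubes of $Q_0$ into ``trees,'' on each of which the accretive function $b^1_R$ attached to the tree's top cube $R$ is uniformly accretive and satisfies $\int|b^1_R|^q\le M|\cdot|$ on every subcube of the tree. The tree tops obey a Carleson packing bound $\sum_{R\subseteq S}|R|\lesssim|S|$ for every dyadic $S$ — the size-selected cubes packed by Chebyshev from (i), the accretivity-selected ones by (iii), after fixing $\varepsilon,M$ suitably. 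On each tree one then has the $b^1_R$-adapted martingale averages $E^{b}_Qf=\big(\int_Q f\big)\,b^1_R\,1_Q\big/\int_Q b^1_R$, their differences $\Delta^{b}_Q=E^{b}_Q-\sum_{\text{children}}E^{b}_{Q'}$ (mean zero, supported in $Q$, a bounded combination of the $b^1_R\,1_{Q'}$), the reproducing formula $1_{Q_0}f=\sum_Q\Delta^{b}_Qf$, and the upper square-function bound $\sum_Q\|\Delta^{b}_Qf\|_2^2\lesssim\|f\|_2^2$; the hypothesis $q>2$ is exactly what furnishes the H\"older room both for this bound and for the error estimates below. One builds the analogous $b^2$-adapted system and applies it to $g$.

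Now expand $\langle Tf,g\rangle=\sum_{P,Q}\langle T\Delta^{b^1}_Pf,\ \Delta^{b^2}_Qg\rangle$ and split the double sum by the relative size and position of $P$ and $Q$. (a) \emph{Separated terms}, where $P$ and $Q$ are non-adjacent on the scale of the larger one: using the mean-zero property of $\Delta^{b^1}_Pf$ (or of $\Delta^{b^2}_Qg$) against the kernel, \eqref{eq6.3b} yields a gain $\big(\min(\ell(P),\ell(Q))/\dist(P,Q)\big)^{\alpha}$ on top of the baseline decay from \eqref{eq6.3a}, and the resulting matrix is summed against $\|\Delta^{b^1}_Pf\|_2,\|\Delta^{b^2}_Qg\|_2$ by a Schur/Cotlar--Stein argument; these are precisely the terms that vanish identically in the perfect dyadic setting of \cite{AHMTT}, so this is the main new work. (b) \emph{Comparable, adjacent terms}, $\ell(P)\approx\ell(Q)$ with $P,Q$ close: there are $O(1)$ of them per $P$, controlled by a weak-boundedness-type bound using \eqref{eq6.3a}--\eqref{eq6.3b} and \eqref{eq6.13} to absorb the interaction across cube boundaries. (c) \emph{Nested terms} $P\subsetneq Q$ (and symmetrically $Q\subsetneq P$), after discarding the separated and comparable pieces: on the child $Q_i\supseteq P$ one has $\Delta^{b^2}_Qg=\lambda\,b^2_{R'}1_{Q_i}$, so the term becomes $\bar\lambda\,\langle\Delta^{b^1}_Pf,\ T^{\tr}(b^2_{R'}1_{Q_i})\rangle$; writing $T^{\tr}(b^2_{R'}1_{Q_i})=T^{\tr}b^2_{R'}-T^{\tr}(b^2_{R'}1_{Q_i^{c}})$, the second term pairs against the mean-zero $\Delta^{b^1}_Pf$ and is controlled by \eqref{eq6.3} together with \eqref{eq6.13}, while the first, summed over $P$ inside a fixed tree, assembles into a paraproduct whose symbol is the adapted-wavelet sequence $\big(\langle T^{\tr}b^2_{R'},\psi^{b^1}_P\rangle\big)_P$ (and, dually, $\big(\langle Tb^1_R,\psi^{b^2}_Q\rangle\big)_Q$).

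It remains to close the paraproduct. Hypothesis (ii) gives $\int_{R'}|T^{\tr}b^2_{R'}|^2\le C|R'|$, so the quasi-orthogonality of the adapted wavelets yields the Bessel-type bound $\sum_{P\subseteq R'}|\langle T^{\tr}b^2_{R'},\psi^{b^1}_P\rangle|^2\le C|R'|$, i.e.\ the paraproduct symbol attached to each tree is Carleson at the level of that tree's top; combined with the tree packing from the second step, this is exactly the hypothesis of the extrapolation-of-Carleson-measures lemma of \cite{AHMTT}, which upgrades the per-tree estimate (self-improving in nature) to the global Carleson bound and hence bounds the paraproduct by $C\|f\|_2\|g\|_2$ with $C$ independent of $\|K\|_\infty$. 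Summing the contributions (a), (b), (c) and letting $Q_0\uparrow\mathbb{R}^n$ finishes the proof. The chief obstacle I anticipate is step (a): the off-diagonal terms, identically zero in \cite{AHMTT}, must now be summed by hand, which forces careful bookkeeping of the H\"older decay \eqref{eq6.3b} and of the attendant almost-orthogonality; a secondary difficulty lies in verifying the Carleson condition in step (c) and in using \eqref{eq6.13} to handle the cube-boundary interactions that the rigid dyadic calculus of \cite{AHMTT} disposed of automatically.
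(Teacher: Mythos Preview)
Your route is genuinely different from the paper's. The paper does \emph{not} expand $\langle Tf,g\rangle$ bilinearly; it reduces everything to the $T1$ theorem by verifying the local testing condition $\|T^{\tr}1_Q\|_{L^1(Q)}\le C|Q|$ (and its dual). This is done by self-improvement: with $B_1:=\sup_{Q\subseteq Q_{\bg}}|Q|^{-1}\|T^{\tr}1_Q\|_{L^1(Q)}$ (finite a priori since $K\in L^\infty$), one proves $|\int_{Q_1}Tf|\le(1-\epsilon)B_1|Q_1|+C|Q_1|$ for all $\|f\|_\infty\le1$ supported in $Q_1$, via a single stopping-time decomposition of $f$ relative to $b^1_{Q_1}$. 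The principal term that emerges is a Carleson-type quantity $\sum_{Q\in\Omega_1}\|\Delta^{b_1}_QT^{\tr}1_{Q_1}\|_2^2$, and \emph{that} is bounded by a second, nested iteration (on an auxiliary supremum $B_2$) using a stopping-time decomposition relative to $b^2_{Q_2}$. So both strategies ultimately rest on two interacting corona constructions and a Carleson-extrapolation mechanism, but the paper packages these as two nested ``$B\le(1-\epsilon)B+C$'' loops aimed at $T1_{\loc}$, rather than a single bilinear matrix. Your direct approach avoids invoking the $T1$ theorem as a black box, at the cost of having to organize the almost-orthogonality by hand.

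There is, however, a concrete gap in your outline. Your stopping rules select cubes only where accretivity fails or where $\int|b|^q$ is large; you never stop where $[|Tb^1_R|^2]_{Q'}$ (respectively $[|T^{\tr}b^2_{R'}|^2]_{Q'}$) exceeds a threshold. Without this additional criterion, step (b) breaks down at the truly diagonal pairings $\langle T\Delta^{b^1}_Qf,\Delta^{b^2}_Qg\rangle$: these unpack into terms of the form $\langle T(b^1_R1_{Q'}),\,b^2_{R'}1_{Q'}\rangle$ with $Q'$ a child of $Q$, and bounding them (for instance by writing $\langle b^1_R1_{Q'},\,T^{\tr}(b^2_{R'}1_{Q'})\rangle$ and appealing to the analogue of the paper's Lemma~\ref{l8.33}) requires $\int_{Q'}|T^{\tr}b^2_{R'}|^2\lesssim|Q'|$ at the scale of $Q'$, not merely at the tree top $R'$. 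The kernel bounds \eqref{eq6.3} and \eqref{eq6.13} alone cannot manufacture a weak-boundedness estimate here, since the supports coincide. The same local control is what makes your paraproduct Carleson condition in step (c) hold on \emph{every} subcube of a tree rather than only at the top. The paper's stopping construction includes exactly this extra rule---see \eqref{eq8.26}(2), where one also stops on large $[|Tb|^2]_{P_j}$ (and on $[(Mb)^q]$, needed later for the ring estimates where $q>2$ enters)---and hypothesis (ii) together with the Hardy--Littlewood maximal inequality guarantees that the additional stopping cubes still pack. Once you add that criterion, your bilinear scheme becomes viable.
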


The theorem in \cite{Ch} is similar, except that the $L^2$ (or $L^{2+\epsilon}$) control in
conditions $(i)$ and $(ii)$ is replaced by $L^\infty$ control.
The proof of the present theorem follows that of \cite{AHMTT}, except for some technical adjustments related to the presence of
the Calder\'on-Zygmund tails in condition \eqref{eq6.3b}.  These tails do not appear in
the perfect dyadic setting considered in \cite{AHMTT}, and their absence allows
one to take $q=2$ in condition $(i)$;  moreover, Auscher and Yang \cite{AY}
have extended the present result to the case $q=2$, by reducing to \cite {AHMTT}. At present,
we do not know a direct proof of our theorem without taking $q>2$, nor 
(in contrast to the perfect dyadic case) any proof with $q<2$.

The present version of the theorem has been applied in \cite{AAAHK} to establish $L^2$
boundedness
of  layer potentials associated to certain divergence form elliptic operators with bounded measurable coefficients.

\section{Preliminaries \label{s2}}

We begin by setting some notation, and recalling some familiar facts. 
In particular, we discuss adapted averages and difference operators following
\cite{CJS}.  We define the standard dyadic conditional
expectation and martingale difference operators
$$E_kf(x)=\sum_{Q\in \mathbb{D}_k}1_Q(x)\,\frac{1}{|Q|}\int_Qf,$$
where $\mathbb{D}_k$, $k\in \mathbb{Z}$, denotes the standard grid of dyadic cubes in $\mathbb{R}^n$ having side
length $2^{-k}$, and
\begin{align*} \Delta_k&\equiv E_{k+1}-E_k.\\\intertext{Then} E_jE_k&=E_k,\quad j\geq k\end{align*}
and thus also
\begin{equation}\begin{split}\label{eq8.1} \Delta_j\Delta _k &=0,\quad j\neq k\\\Delta_k^2&=\Delta
_k\end{split}\end{equation}
Moreover, the operators $E_k$ and $\Delta _k$ are self-adjoint. Consequently, we have the square function identity
\begin{equation}\label{eq8.2} \int_{\mathbb{R}^n}\sum^\infty_{k=-\infty} |\Delta _kf|^2=\| f\|^2_2,\end{equation}
as well as the discrete Calder\'on reproducing formula
\begin{equation}\label{eq8.3} \sum \Delta^2_k=\sum \Delta_k =I,\end{equation}
where the convergence is in the strong operator topology on $L^2$, as well as point-wise $a.e.$ for $f\in L^2$, as may be
seen by the telescoping nature of the sum, and the fact that
\begin{equation}\label{eq8.4} \lim_{k\to \infty} E_kf=f \text{ a.e.},\quad f\in L^p_{\loc} \, , \, 1\leq p\leq
\infty\end{equation}
(by Lebesque's Differentiation Theorem), and 
\begin{equation}\label{eq8.5} \lim_{k\to -\infty} E_kf=0,\quad f\in L^p,1\leq p<\infty.\end{equation}
Details may be found in \cite{St}. As a consequence of \eqref{eq8.2}, we have the standard dyadic Carleson measure
estimate.

\begin{proposition}\label{p8.6} There exists a constant $C$ such that for every dyadic cube $Q$,
$$\frac{1}{|Q|} \int_Q \sum_{k:2^{-k}\leq \ell (Q)} |\Delta_k h(x)|^2dx\leq C\| h\|_{BMO}^2.$$  \end{proposition}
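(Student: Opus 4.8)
The plan is to reduce the estimate to the square function identity \eqref{eq8.2} together with the John--Nirenberg inequality, exploiting the locality of the martingale difference operators $\Delta_k$.

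First I would fix a dyadic cube $Q$ and write $\ell(Q)=2^{-k_0}$, so that the condition $2^{-k}\leq\ell(Q)$ is exactly $k\geq k_0$. Set $h_Q=|Q|^{-1}\int_Q h$. The key elementary observation is a \emph{locality} property: if $x\in Q$ and $k\geq k_0$, then the cubes of $\mathbb{D}_k$ and of $\mathbb{D}_{k+1}$ containing $x$ are both contained in $Q$, since a dyadic cube of side length $\leq\ell(Q)$ is either a subcube of $Q$ or disjoint from $Q$. Hence $E_kh(x)$ and $E_{k+1}h(x)$ depend only on the restriction $h\,1_Q$; moreover, subtracting the constant $h_Q$ alters none of these averages over subcubes of $Q$, so that
$$\Delta_k h(x)=\Delta_k\big(1_Q(h-h_Q)\big)(x),\qquad x\in Q,\ k\geq k_0.$$

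Next I would enlarge the range of summation to all of $\mathbb{Z}$ and apply \eqref{eq8.2} to the function $g=1_Q(h-h_Q)$, which lies in $L^2$ since $BMO$ functions are locally square integrable:
$$\int_Q\sum_{k\geq k_0}|\Delta_k h(x)|^2\,dx=\int_Q\sum_{k\geq k_0}\big|\Delta_k(1_Q(h-h_Q))(x)\big|^2\,dx\leq\int_{\mathbb{R}^n}\sum_{k=-\infty}^\infty\big|\Delta_k(1_Q(h-h_Q))(x)\big|^2\,dx=\int_Q|h-h_Q|^2.$$
Dividing by $|Q|$ yields $\tfrac{1}{|Q|}\int_Q\sum_{k\geq k_0}|\Delta_k h|^2\leq\tfrac{1}{|Q|}\int_Q|h-h_Q|^2$, and the right-hand side is dominated by $C\|h\|_{BMO}^2$ by the John--Nirenberg inequality (and trivially so if the $BMO$ norm is defined through $L^2$ oscillations).

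The argument presents no genuine obstacle: the only point that needs to be written carefully is the locality identity for $\Delta_k$ on $Q$, and — depending on the normalization of the $BMO$ norm — the invocation of John--Nirenberg to pass from $L^1$ to $L^2$ oscillations; everything else is a direct consequence of the orthogonality identity \eqref{eq8.2}.
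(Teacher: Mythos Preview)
Your argument is correct and is precisely the standard dyadic adaptation of the Fefferman--Stein argument that the paper alludes to; the paper itself omits the proof entirely (see the Remark following the Proposition), so you have simply supplied the well-known details: locality of $\Delta_k$ on $Q$, the square function identity \eqref{eq8.2} applied to $1_Q(h-h_Q)$, and John--Nirenberg.
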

{\em Remark.}  The well-known proof is the same as that in the continuous parameter case \cite{FS}, and is omitted.

Suppose now that $b$ is dyadically pseudo-accretive, i.e.
\begin{equation}\label{DPA}\tag{D$\psi$A} b\in L^\infty,\quad |E_kb|\geq \delta,\end{equation}
for some $\delta>0$, and for all $k\in \mathbb{Z}$, or more generally that
\begin{equation}\label{eq8.7} \left| \frac{1}{|Q|}\int_Q b\right|\geq \delta \,,\quad \int_Q |b |^2 \leq C |Q|\end{equation}
for all $Q$ in some ``good" subset of $\mathbb{D}_k$. Then we can define the adapted expectation operators
$$E^b_kf=\frac{E_k(fb)}{E_k(b)}$$
(at least on the good cubes), and we can also define the martingale difference operators
$$\Delta_k^b=E^b_{k+1}-E^b_k,$$
at least on cubes $Q\in \mathbb{D}_{k}$ which are not only ``good", but whose dyadic children are also ``good"  (in the sense of \eqref{eq8.7}).
The following result is well known (see, e.g. [Ch2, p. 45])

\begin{proposition}\label{p8.8} Suppose $b\in D\psi A$. Then we have the following square function estimate
$$\int_{\mathbb{R}^n}\sum |\Delta^b_kf|^2\leq C\|f\|^2_2.$$\end{proposition}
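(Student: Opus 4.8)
The plan is to reduce the estimate to the ordinary (orthogonal) square function identity \eqref{eq8.2} together with a single Carleson-measure argument, by decomposing each $\Delta_k^b$ into an honest martingale difference plus a paraproduct-type remainder. The point is that, although $\sum_k \Delta_k^b = I$ telescopes and the $\Delta_k^b$ behave like ``almost orthogonal'' projections, they are not self-adjoint, so the clean identity $\sum_k \|\Delta_k^b f\|_2^2 = \|f\|_2^2$ fails and a genuine inequality must be established.

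First I would set $\beta_k := 1/E_k b$; by the hypothesis \eqref{DPA}, $\beta_k$ is constant on each cube of $\mathbb{D}_k$ and $\|\beta_k\|_\infty \le \delta^{-1}$. Putting $g := bf$, so that $g \in L^2$ with $\|g\|_2 \le \|b\|_\infty \|f\|_2$, and using $E_{k+1}g = E_k g + \Delta_k g$ together with the elementary identity
\[ \beta_{k+1} - \beta_k \;=\; \frac{E_k b - E_{k+1} b}{(E_{k+1} b)(E_k b)} \;=\; -(\Delta_k b)\,\beta_k\beta_{k+1}, \]
a direct computation gives the pointwise decomposition
\[ \Delta_k^b f \;=\; \beta_{k+1}\,\Delta_k g \;-\; \beta_k\beta_{k+1}\,(\Delta_k b)\,E_k g, \]
and hence $|\Delta_k^b f|^2 \le 2\delta^{-2}|\Delta_k g|^2 + 2\delta^{-4}|\Delta_k b|^2\,|E_k g|^2$.

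Summing in $k \in \mathbb{Z}$ and integrating, the first term is handled directly by \eqref{eq8.2}: $\sum_k \|\Delta_k g\|_2^2 = \|g\|_2^2 \le \|b\|_\infty^2 \|f\|_2^2$. For the second term I would note that $b \in L^\infty \subset BMO$ with $\|b\|_{BMO} \le 2\|b\|_\infty$, so that by Proposition \ref{p8.6} the discrete measure $\mu_k := |\Delta_k b|^2$ satisfies the Carleson condition $\sum_{k:\,2^{-k}\le \ell(Q)} \int_Q \mu_k \le C\|b\|_\infty^2 |Q|$ for every dyadic cube $Q$; the dyadic Carleson embedding theorem (a standard consequence of \eqref{eq8.2} via Proposition \ref{p8.6} and the $L^2$-boundedness of the dyadic maximal function) then yields $\sum_k \int \mu_k |E_k g|^2 \le C\|b\|_\infty^2 \|g\|_2^2 \le C\|b\|_\infty^4 \|f\|_2^2$. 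Combining the two bounds gives $\int_{\mathbb{R}^n} \sum_k |\Delta_k^b f|^2 \le C(\delta,\|b\|_\infty)\,\|f\|_2^2$.

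I expect the only real obstacle to be the isolation of the paraproduct remainder and the recognition that $|\Delta_k b|^2$ is a Carleson measure: once this is in hand, the estimate reduces to a routine application of the embedding theorem. The same computation works, mutatis mutandis, under the weaker hypothesis \eqref{eq8.7}, provided that all the sums are restricted to the good cubes on which $E_k^b$ (and hence $\Delta_k^b$) is defined.
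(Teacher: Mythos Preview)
Your argument is correct. The paper actually omits the proof of this proposition, citing \cite{Ch2}; however, your decomposition is exactly the approach the paper uses a few lines later to prove the companion result Proposition~\ref{p8.10} for the transposes $D^b_k$. There the paper writes $|D^b_k f| \le \|b\|_\infty\bigl(|\Delta_k f|/|E_{k+1}b| + |E_k f|\,|\Delta_k b|/(|E_{k+1}b|\,|E_k b|)\bigr)$ and finishes via \eqref{eq8.2}, Proposition~\ref{p8.6}, and Carleson's Lemma --- precisely your martingale-difference-plus-paraproduct split. Your substitution $g=bf$ is the natural device that converts the $\Delta^b_k$ computation into the same form, so the two arguments are essentially identical.
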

We omit the proof.

It is routine to check that for $b\in D\Psi A$, $E^b_k$, $\Delta ^b_k$ also satisfy
\begin{equation}\begin{split}\label{eq8.9} \text{a)}\quad &E^b_kE^b_j=E^b_jE^b_k=E^b_k,\quad j\geq k\\
\text{b)} \quad & \Delta^b_j\Delta^b_k=0\quad j\neq k\\
\text{c)}\quad & (\Delta^b_k)^2=\Delta_k\\
\text{d)}\quad & \lim_{k\to \infty}E^b_k f=f\text{ a.e.},\quad f\in L^p_{\loc}\,,\,p\geq 1\\
\text{e)}\quad & \lim_{k\to \infty}E^b_kf=0,\quad f\in L^p,\;\; 1\leq p<\infty\\
\text{f)}\quad & \sum(\Delta^b_k)^2=\sum \Delta^b_k=I.\end{split}\end{equation}

We shall also find it useful to consider the transposes of the operators $E^b_k$, $\Delta^b_k$, which we denote as follows:
\begin{equation*} A^b_k\equiv (E^b_k)^{\tr} =b\frac{E_k}{E_k(b)},\qquad
D^b_k = A^b_{k+1}-A^b_k=(\Delta ^b_k)^{\tr}.\end{equation*}
One may readily verify that for $b\in D\psi A$ the operators $A^b_k$, $D^b_k$ satisfy the properties enjoyed by $E^b_k$,
$\Delta^b_k$ in \eqref{eq8.9}. Moreover, we have
\begin{proposition}\label{p8.10} If $b\in D\psi A$ then
$$\sum_k\|D^b_kf\|^2_2\leq C\| f\|^2_2.$$\end{proposition}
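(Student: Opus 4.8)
The plan is to deduce Proposition~\ref{p8.10} by duality from the square function estimate in Proposition~\ref{p8.8}. Since $D^b_k=(\Delta^b_k)^{\tr}$, for any $f,g\in L^2$ we have
$\sum_k\langle D^b_kf,g\rangle=\sum_k\langle f,\Delta^b_kg\rangle$. The natural idea is to choose, for each $k$, a function $g_k$ that ``extracts'' the norm of $D^b_kf$, i.e. pair $D^b_kf$ against its normalized self; concretely, set $g=\sum_k D^b_kf$ (or a finite truncation thereof, to justify interchanging sums), so that
\[
\sum_k\|D^b_kf\|_2^2 \;=\;\sum_k\langle D^b_kf,\,D^b_kf\rangle\;=\;\Big\langle f,\;\sum_k\Delta^b_k\Big(\sum_j D^b_jf\Big)\Big\rangle.
\]
By Cauchy--Schwarz this is at most $\|f\|_2\,\big\|\sum_k\Delta^b_k\big(\sum_j D^b_jf\big)\big\|_2$, and the last factor is controlled by $\big(\sum_k\|\Delta^b_k(\sum_j D^b_jf)\|_2^2\big)^{1/2}$ once one uses the near-orthogonality of the $\Delta^b_k$ coming from \eqref{eq8.9}(b),(c); Proposition~\ref{p8.8} (applied to $h=\sum_j D^b_jf$) then bounds this by $C\|\sum_j D^b_jf\|_2$. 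Iterating, $\|\sum_j D^b_jf\|_2\le C\|f\|_2$ would follow from the same estimate we are proving, so this particular bootstrap must be avoided; instead I would run the duality only once and bound $\|\sum_j D^b_jf\|_2$ directly.

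A cleaner route: because $A^b_k$, $D^b_k$ satisfy the analogues of \eqref{eq8.9}, in particular $(D^b_k)^2=D^b_k$ and $D^b_jD^b_k=0$ for $j\ne k$, the operators $D^b_k$ are (not self-adjoint, but) a system of ``orthogonal'' idempotents, and $\sum_k D^b_k=I$ strongly on $L^2$. Hence for a finite sum $S_N f=\sum_{|k|\le N} D^b_kf$ one has, using $D^b_jD^b_k=\delta_{jk}D^b_k$,
\[
\|S_Nf\|_2^2=\sum_{|j|,|k|\le N}\langle D^b_kf,D^b_jf\rangle
=\sum_{|k|\le N}\langle D^b_kf,D^b_kf\rangle+\text{cross terms},
\]
and the cross terms are handled by writing $\langle D^b_kf,D^b_jf\rangle=\langle f,\Delta^b_k D^b_jf\rangle$ and noting $\Delta^b_kD^b_j$ vanishes unless $k=j$ \emph{after} using \eqref{eq8.9}(b),(c) on one factor and the corresponding identity for $D^b$ on the other. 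This reduces the whole matter to: $\sum_k\langle D^b_kf,D^b_kf\rangle=\langle f,\sum_k\Delta^b_kD^b_kf\rangle\le\|f\|_2\|\sum_k\Delta^b_kD^b_kf\|_2$, and now $\Delta^b_kD^b_k$ is a single-scale operator, uniformly $L^2$-bounded (it is $(\Delta^b_k)^2$ composed with a bounded projection, or can be estimated directly by the $L^\infty$ bound on $b$ and $|E_kb|\ge\delta$), whence $\|\sum_k\Delta^b_kD^b_kf\|_2^2\le\sum_k\|\Delta^b_kD^b_kf\|_2^2\lesssim\sum_k\|D^b_kf\|_2^2$ by the orthogonality of the $\Delta^b_k$. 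Substituting back gives $\sum_k\|D^b_kf\|_2^2\le C\|f\|_2\,(\sum_k\|D^b_kf\|_2^2)^{1/2}$, and dividing yields the claim after a standard truncation/limiting argument to pass from $S_N$ to the full sum.

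The main obstacle is making the interchange of the two infinite sums legitimate and controlling the cross terms uniformly in $N$: one must first establish the bound for the truncated operators $S_N$ with a constant independent of $N$, then let $N\to\infty$ using \eqref{eq8.9}(f) (strong convergence of $\sum D^b_k=I$) together with the a.e.\ limits \eqref{eq8.9}(d),(e) transported to the transpose side. The second, more technical point is verifying the uniform single-scale bound $\|\Delta^b_kD^b_k\|_{L^2\to L^2}\le C$; this is where the hypotheses $b\in L^\infty$ and $|E_kb|\ge\delta$ enter quantitatively, exactly as in the verification that $A^b_k$, $D^b_k$ inherit the properties of \eqref{eq8.9}, so it should follow from the same routine computations referenced there. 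Once these two points are in place, the proof is just duality plus orthogonality and I would present it in a few lines.
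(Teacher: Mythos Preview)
Your duality approach has a genuine gap at the key step. You claim that
\[
\Big\|\sum_k \Delta^b_k D^b_k f\Big\|_2^2 \;\le\; \sum_k \|\Delta^b_k D^b_k f\|_2^2
\]
``by the orthogonality of the $\Delta^b_k$''. But the relations $\Delta^b_j\Delta^b_k=0$ ($j\neq k$) and $(\Delta^b_k)^2=\Delta^b_k$ say only that the $\Delta^b_k$ are a system of mutually annihilating idempotents; since they are \emph{not} self-adjoint, their ranges are not orthogonal in $L^2$, and no such Pythagorean inequality follows. If you try to justify it by duality --- writing $\|\sum_k g_k\|_2=\sup_{\|h\|=1}|\sum_k\langle g_k,h\rangle|$ with $g_k=\Delta^b_k D^b_k f$ --- you get $\sum_k\langle D^b_k f, D^b_k h\rangle$ and then need $\sum_k\|D^b_k h\|_2^2\le C\|h\|_2^2$, which is exactly the proposition you are proving. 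This is the same circularity you correctly flagged in your first attempt; it has not been removed, only relocated. Note also that $D^b_j\Delta^b_k\neq 0$ for $j>k$ in general (compute: $\Delta^b_k f$ is constant on $\mathbb{D}_{k+1}$-cubes, so $E_j\Delta^b_k f=\Delta^b_k f$ for $j\ge k+1$, whence $D^b_j\Delta^b_k f = b\,\Delta^b_k f\,(1/E_{j+1}b - 1/E_jb)$), so the cross terms do not vanish.

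The paper's argument avoids duality entirely and is much shorter: write out $D^b_k f$ explicitly as $b\bigl(E_{k+1}f/E_{k+1}b - E_k f/E_k b\bigr)$ and bound it pointwise by
\[
\|b\|_\infty\Big(\frac{|\Delta_k f|}{|E_{k+1}b|} + \frac{|E_k f|\,|\Delta_k b|}{|E_{k+1}b|\,|E_k b|}\Big).
\]
The first term is handled by the square-function identity \eqref{eq8.2}; the second by the Carleson measure bound on $\sum_k|\Delta_k b|^2$ (Proposition~\ref{p8.6}, since $b\in L^\infty\subset BMO$) together with Carleson's embedding lemma and $|E_k f|\le Mf$. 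The $D\psi A$ hypothesis enters through $\|b\|_\infty$ and the lower bounds $|E_k b|\ge\delta$.
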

\begin{proof} Observe that $A^b_kf=bE_kf/ E_kb$. Hence
\begin{equation*}|D^b_kf|\leq |b|\left| \frac{E_{k+1}f}{E_{k+1}b}-\frac{E_kf}{E_kb}\right|
\leq \| b\|_\infty \left(\frac{|\Delta_kf|}{|E_{k+1}b|} +\frac{|E_kf||\Delta_k b|}{|E_{k+1}b||E_kb|}\right)
.\end{equation*}
The conclusion of the proposition now follows from \eqref{eq8.2}, Proposition~\ref{p8.6}, dyadic pseudo-accretivity,
and the dyadic version of Carleson's Lemma. We omit the details.\end{proof}

Next, we introduce some further terminology.

\begin{definition}\label{d8.11} Given a dyadic cube $Q\subseteq \mathbb{R}^n$, a ``discrete Carleson region" is the
collection
$$R_Q\equiv \{\text{dyadic }Q'\text{ such that } Q'\subseteq Q\}.$$
We shall refer to $Q$ as the ``top" of $R_Q$.
We remark that in using the term ``discrete Carleson region" in this fashion, we are implicitly identifying a cube $Q'$ with its associated
``Whitney box" $Q' \times \left[\ell(Q')/2,\ell(Q')\right]$.\end{definition}

\begin{definition}\label{d8.12} Given a dyadic cube $Q\subseteq \mathbb{R}^n$, a ``discrete sawtooth region" is the
collection
$$\Omega \equiv R_Q\backslash (\cup R_{P_j}),$$
where $\{ P_j\}$ is a family of non-overlapping dyadic sub-cubes of $Q$.\end{definition}

\begin{definition}\label{d8.13} We say that $b$ is ``$q$-dyadically pseudo accretive on a sawtooth domain $\Omega$"
($b\in q-D\psi A (\Omega)$), if there exist constants $\delta >0$ and $C_0<\infty$ such that for every $Q'\in
\Omega$
\begin{enumerate}\item[(i)] \quad$\left|\frac{1}{|Q'|} \int_{Q'} b\right|\geq \delta$
\item[(ii)]\quad $\frac{1}{|Q'|} \int_{Q'} |b|^q\leq C_0$.\end{enumerate}\end{definition}

We now introduce some alternative notation, which we shall find useful when working with discrete sawtooth regions.
For $Q\in \mathbb{D}_k$, we set
$$D_Q^b f(x)\equiv 1_Q(x)D^b_kf(x)$$
and we adapt the analogous convention for $A^b_k(A^b_Q)$, $\Delta^b_k(\Delta^b_Q)$ and $E^b_k(E^b_Q)$. Since the cubes
in a given dyadic scale are non-overlapping, we have, for example
$$\sum_Q \| D^b_Qf\|^2_2=\sum^\infty_{k=-\infty}\| D_k^bf\|^2_2,$$
where the first sum runs over all dyadic cubes.

We also describe a convenient splitting of a discrete sawtooth region as follows. Given a dyadic cube $Q_1$, and a
discrete sawtooth
$$\Omega \equiv R_{Q_1}\backslash (\cup R_{P_j}),$$
we split
$$\Omega \equiv \Omega _1 \cup \Omega_{\text{buffer}},$$
where
$$\Omega_{\text{buffer}} \equiv \{ Q\in \Omega:Q\text{ has at least one child not in }\Omega \}.$$
Thus, if $Q\in \Omega_1$, then every child of $Q$ belongs to $\Omega$. We have the following extension of
Proposition~\ref{p8.10}:

\begin{lemma}\label{l8.14} Let $\Omega \equiv R_{Q_1}\backslash(\cup R_{P_j})$ be a discrete sawtooth region
corresponding to a dyadic cube $Q_1$, and let $\Omega_1\cup\Omega_{\text{buffer}}$ be the splitting of $\Omega$
described above. Suppose also that $b\in 2-D\psi A(\Omega)$. Then
$$\sum_{Q\in \Omega_1}\| D^b_Qf\|^2_2\leq C \| f\|^2_{L^2(Q_1)}.$$\end{lemma}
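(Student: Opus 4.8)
The plan is to reduce the sawtooth estimate for $\{D^b_Q\}_{Q\in\Omega_1}$ to the global estimate of Proposition~\ref{p8.10}, by localizing $f$ and controlling the error terms that arise from the truncation. First I would introduce an auxiliary accretive function: since $b\in 2\text{-}D\psi A(\Omega)$ only controls $b$ on cubes of $\Omega$, I would replace $b$ by a function $\tilde b$ that agrees with $b$ on the portion of $Q_1$ covered by $\Omega$ (more precisely, on $Q_1\setminus\cup P_j$, or on a slightly enlarged region) and is modified to be globally in $D\psi A$ — for instance setting $\tilde b = b\,1_{Q_1\setminus\cup P_j} + 1_{(\cup P_j)\cup Q_1^c}$, or some similar construction, so that $\tilde b\in D\psi A$ with constants depending only on $\delta,C_0$. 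The key point is that for $Q\in\Omega_1$, \emph{every child of $Q$ lies in $\Omega$}, hence every child (and $Q$ itself) is disjoint from all the $P_j$; therefore on such cubes $E_k$ and $E_{k+1}$ of $b$ and of $\tilde b$ coincide on the relevant subcubes, which forces $D^b_Q f = D^{\tilde b}_Q (1_{Q_1} f)$ — the operator only ``sees'' values of $f$ inside $Q_1$ and values of $b$ that equal $\tilde b$. This is exactly the role of the buffer/$\Omega_1$ splitting: the buffer cubes are precisely the ones where this identification could fail, and they are excluded.

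Once that identification is in place, I would write
\[
\sum_{Q\in\Omega_1}\|D^b_Q f\|_2^2 \;=\; \sum_{Q\in\Omega_1}\|D^{\tilde b}_Q (1_{Q_1}f)\|_2^2 \;\leq\; \sum_{Q\ \mathrm{dyadic}}\|D^{\tilde b}_Q (1_{Q_1}f)\|_2^2 \;=\; \sum_k \|D^{\tilde b}_k (1_{Q_1}f)\|_2^2,
\]
using the non-overlapping convention for $D^{\tilde b}_Q$ recorded just before the lemma, and then apply Proposition~\ref{p8.10} to $\tilde b\in D\psi A$ to bound the last sum by $C\|1_{Q_1}f\|_2^2 = C\|f\|_{L^2(Q_1)}^2$. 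The constant depends only on $\|\tilde b\|_\infty$ and the accretivity constant of $\tilde b$, both of which are controlled by $\delta$ and $C_0$ from the definition of $2\text{-}D\psi A(\Omega)$ (note the $q=2$ integrability hypothesis is what lets us invoke Proposition~\ref{p8.10}, whose proof rests on \eqref{eq8.2} and Proposition~\ref{p8.6}).

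The main obstacle — and the step requiring genuine care — is the identification $D^b_Q f = D^{\tilde b}_Q(1_{Q_1}f)$ for $Q\in\Omega_1$. One must check it at the level of the defining formula $D^{\tilde b}_k g = \tilde b\,(E_{k+1}g/E_{k+1}\tilde b - E_k g/E_k\tilde b)$, restricted by $1_Q$. Here $Q\in\Omega_1\subseteq\Omega\subseteq R_{Q_1}$, so $Q\subseteq Q_1$ and $1_{Q_1}f = f$ on $Q$; and every child $Q'$ of $Q$ lies in $\Omega$, so $Q'$ is disjoint from every $P_j$, whence on $Q'$ one has $\tilde b = b$ by construction, so the averages $E_{k+1}\tilde b$ and $E_k\tilde b$ over $Q'$ and $Q$ agree with those of $b$. (When defining $\tilde b$ one should make sure the modification respects that $b^i_Q$ and these $b$'s are supported in the relevant cube; a clean choice is $\tilde b = 1_{(\cup R_{P_j})^{\mathrm{top\ region}}} + b$ elsewhere, adjusted so $|E_k\tilde b|\geq\delta'$ uniformly — the precise cutoff is routine and I would not belabor it.) Care is also needed that $D^{\tilde b}_Q$ is well-defined, i.e. that $Q$ and its children satisfy the accretivity needed to form $\Delta^{\tilde b}$, $D^{\tilde b}$ — but since $\tilde b\in D\psi A$ globally this holds for all dyadic cubes, which is precisely why passing to $\tilde b$ (rather than working with $b$ only on $\Omega$) is the right move.
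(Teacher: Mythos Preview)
There is a genuine gap in the identification step. You claim that for $Q\in\Omega_1$, every child $Q'$ of $Q$ lies in $\Omega$ and is therefore \emph{disjoint} from every $P_j$, whence $\tilde b=b$ on $Q'$. But membership in $\Omega=R_{Q_1}\setminus\bigcup R_{P_j}$ only means that $Q'$ is not \emph{contained} in any $P_j$; by dyadic nesting, $Q'$ may still \emph{properly contain} some of the $P_j$. In that case $\tilde b\neq b$ on $Q'\cap\bigl(\bigcup P_j\bigr)$, so both the prefactor $b(x)$ in $D^b_Qf(x)=1_Q(x)\,b(x)\bigl([f]_{Q'}/[b]_{Q'}-[f]_Q/[b]_Q\bigr)$ and the averages $[b]_{Q'},[b]_Q$ fail to match those of $\tilde b$. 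The identification $D^b_Qf=D^{\tilde b}_Q(1_{Q_1}f)$ therefore breaks down, and with it the reduction to Proposition~\ref{p8.10}. (A secondary issue: for the candidate $\tilde b=b\,1_{Q_1\setminus\cup P_j}+1_{(\cup P_j)\cup Q_1^c}$ one would also need to verify $|E_k\tilde b|\geq\delta'$ on \emph{all} dyadic cubes, including those $Q\in\Omega$ that swallow several $P_j$'s and those $Q\supsetneq Q_1$; this is not automatic and you have not checked it.)

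The paper avoids any extension of $b$ and works directly. For $Q\in\Omega_1\cap\mathbb D_k$ it expands
\[
\|D^b_Qf\|_2^2=\sum_{Q'\text{ child of }Q}\left|\frac{\Delta_Qf}{E_{Q'}b}-\frac{E_Qf\,\Delta_Qb}{E_{Q'}b\,E_Qb}\right|^2\int_{Q'}|b|^2,
\]
and uses precisely that $Q$ and all its children $Q'$ lie in $\Omega$, so the $2\text{-}D\psi A(\Omega)$ hypothesis gives $|E_{Q'}b|,|E_Qb|\geq\delta$ and $\int_{Q'}|b|^2\leq C_0|Q'|$. This yields $\|D^b_Qf\|_2^2\leq C\int_Q\bigl(|\Delta_Qf|^2+|E_Qf|^2|\Delta_Qb|^2\bigr)$. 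Summing over $Q\in\Omega_1$, the first term is controlled by \eqref{eq8.2}; the second requires a localized Carleson measure bound for $\{\Delta_Qb\}_{Q\in\Omega_1}$ (Lemma~\ref{l8.15}) together with the dyadic Carleson embedding. Note that this argument only ever uses the behavior of $b$ on cubes of $\Omega$ --- no global extension is needed.
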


\begin{proof} Fix $Q\in \mathbb{D}_k\cap \Omega_1$. By definition,
\begin{multline*}\| D^b_Q f\|^2_2= \int_Q |D^b_kf|^2
=\int_Q\left| b\left( \frac{E_{k+1}f}{E_{k+1}b}-\frac{E_kf}{E_kb}\right)\right|^2\\
=\sum_{\substack{Q'\in \mathbb{D}_{k+1}\\ Q'\subseteq Q}}\int_{Q'} 
\left|b\left(\frac{E_{Q'}f}{E_{Q'}b}-\frac{E_Qf}{E_Qb}\right)\right|^2
=\sum_{\substack{Q'\in \mathbb{D}_{k+1}\\Q'\subseteq Q}} \left| \frac{E_{Q'}f}{E_{Q'}b}-\frac{E_Qf}{E_Q b}\right|^2 \int_{Q'} |b|^2\\
= \sum_{\substack{Q'\in \mathbb{D}_{k+1}\\ Q'\subseteq Q}} \int_{Q'} \left| \frac{\Delta _Q f}{E_{Q'}b}-\frac{E_Q f\Delta _Q b}{E_{Q' } b\, E_Q b}\right|^2 \frac{1}{|Q'|}\int_{Q'} |b|^2 ,\end{multline*}
where in the last two steps we have used that $E_{Q'}$, $E_Q$ are constant on $Q'$. But if $Q\in \Omega_1$, then its children $Q'$ all belong to $\Omega$. Since $b\in 2-D\psi A(\Omega)$, the last expression is therefore bounded by
\begin{equation*} C\int_Q \left(|\Delta_Qf|^2+ |E_Qf|^2 |\Delta_Qb|^2\right).\end{equation*}
Summing over $Q\in \Omega_1$ yields the desired estimate, once we have proved the following analogue of the discrete Fefferman-Stein Carleson measure estimate Proposition~\ref{p8.6}.
\end{proof}

\begin{lemma}\label{l8.15} Let $Q_1$, $\Omega=\Omega_1\cup \Omega_{ \buffer}$ be as in the previous Lemma, and suppose that $b\in 2-D\psi A(\Omega)$. Then 
\begin{equation*} \sup \frac{1}{|\widetilde{Q}|} \sum_{Q\in \Omega_1,Q\subseteq \widetilde{Q}} \| \Delta_Q b\|^2_2\leq C C_0,\end{equation*}
where $C_0$ is the constant in Definition~\ref{d8.13}, and where the supreme runs over all dyadic $\widetilde{Q}\subseteq Q_1$.\end{lemma}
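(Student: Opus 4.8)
The plan is to reduce the estimate to the already-available standard dyadic Carleson measure estimate, Proposition~\ref{p8.6}, applied to a suitable surrogate function. The issue is that $b$ need not lie in $\mathrm{BMO}$ globally — the pseudo-accretivity hypothesis $b\in 2\text{-}D\psi A(\Omega)$ only controls averages of $|b|$ and $|b|^2$ over cubes in the sawtooth $\Omega$, not over arbitrary dyadic cubes. So the first step is to localize: fix a dyadic $\widetilde Q\subseteq Q_1$ and construct a function $b_{\widetilde Q}$ which agrees with $b$ ``as far as the relevant martingale differences can see'' but is globally in $L^2$ with controlled norm. A natural choice is $b_{\widetilde Q}\equiv 1_{\widetilde Q}\,b$, or, if one wants to keep the martingale differences $\Delta_Q b$ for $Q\in\Omega_1$, $Q\subseteq\widetilde Q$, literally unchanged, one replaces $b$ outside $\widetilde Q$ by the constant $E_{\widetilde Q}b$ and inside the excluded regions $R_{P_j}$ (intersected with $\widetilde Q$) by the constants $E_{P_j}b$ on the relevant maximal excluded cubes. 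Either way, for $Q\in\Omega_1$ with $Q\subseteq\widetilde Q$ one has $\Delta_Q b_{\widetilde Q}=\Delta_Q b$, since such a $Q$ and all its children lie in $\Omega\cap R_{\widetilde Q}$, where $b_{\widetilde Q}=b$.

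Next I would estimate $\|b_{\widetilde Q}\|_{\mathrm{BMO}}$, or really just the piece of the Carleson sum we need, by $C\,C_0^{1/2}$. With the choice that replaces $b$ by $E_{\widetilde Q}b$ outside $\widetilde Q$ and by $E_{P_j}b$ on the maximal excluded cubes $P_j\cap\widetilde Q$, the function $b_{\widetilde Q}$ is supported on $\widetilde Q$ after subtracting the constant $E_{\widetilde Q}b$, and on any dyadic subcube $R\subseteq\widetilde Q$ its mean oscillation is controlled: if $R\in\Omega$ then $\frac1{|R|}\int_R|b_{\widetilde Q}-E_R b_{\widetilde Q}|^2\le \frac1{|R|}\int_R|b|^2\le C_0$ by Definition~\ref{d8.13}(ii); if $R$ lies inside some excluded $P_j$ then $b_{\widetilde Q}$ is constant on $R$ and the oscillation is zero; and the intermediate case, where $R$ straddles the boundary between $\Omega$ and the excluded sawtooth, is handled by noting $R$ has an ancestor in $\Omega$ of comparable or larger size, or directly by splitting $R$ into its $\Omega$-part and its excluded part and using that on the excluded part $b_{\widetilde Q}$ is a bounded (by the $C_0$-controlled averages $E_{P_j}b$) piecewise constant function. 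Hence $\|b_{\widetilde Q}\|_{\mathrm{BMO}}^2\le C\,C_0$. For cubes $R$ not contained in $\widetilde Q$ we do not care, because the target sum only runs over $Q\subseteq\widetilde Q$.

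Finally I would invoke Proposition~\ref{p8.6} with $h=b_{\widetilde Q}$ and the cube $\widetilde Q$:
$$\frac1{|\widetilde Q|}\int_{\widetilde Q}\sum_{k:\,2^{-k}\le\ell(\widetilde Q)}|\Delta_k b_{\widetilde Q}(x)|^2\,dx\le C\|b_{\widetilde Q}\|_{\mathrm{BMO}}^2\le C\,C_0.$$
Since $\Delta_Q b=1_Q\Delta_k b$ for $Q\in\mathbb D_k$, and since $\Delta_Q b=\Delta_Q b_{\widetilde Q}$ for every $Q\in\Omega_1$ with $Q\subseteq\widetilde Q$, the left-hand side dominates $\frac1{|\widetilde Q|}\sum_{Q\in\Omega_1,\,Q\subseteq\widetilde Q}\|\Delta_Q b\|_2^2$, which is exactly the quantity to be bounded. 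Taking the supremum over $\widetilde Q\subseteq Q_1$ gives the lemma. The main obstacle, and the one point needing genuine care rather than bookkeeping, is the ``straddling'' case in the BMO estimate of $b_{\widetilde Q}$: one must check that replacing $b$ by the constants $E_{P_j}b$ on the excluded cubes does not create large oscillation across the interface between $\Omega$ and $\cup_j R_{P_j}$, which is where the $L^q$ (here $q=2$ suffices) control from Definition~\ref{d8.13}(ii) on cubes still in $\Omega$ — applied to the minimal $\Omega$-cube containing a given $P_j$ — is used.
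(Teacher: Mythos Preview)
Your proposal is correct but considerably more elaborate than needed. The paper's proof is two lines: it takes your first suggestion $b_{\widetilde Q}=1_{\widetilde Q}\,b$ and applies the $L^2$ square function identity \eqref{eq8.2} directly, bypassing Proposition~\ref{p8.6} and any BMO estimate. The observation that makes this work is that the sum $\sum_{Q\in\Omega_1,\,Q\subseteq\widetilde Q}$ is vacuous unless $\widetilde Q$ itself lies in $\Omega$ (if $\widetilde Q\notin\Omega$ then $\widetilde Q\subseteq P_j$ for some $j$, whence every dyadic $Q\subseteq\widetilde Q$ lies in $R_{P_j}$ and so is not in $\Omega_1$). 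Once $\widetilde Q\in\Omega$, Definition~\ref{d8.13}(ii) gives $\int_{\widetilde Q}|b|^2\le C_0|\widetilde Q|$ directly, and then \eqref{eq8.2} yields
\[
\sum_{Q\in\Omega_1,\,Q\subseteq\widetilde Q}\|\Delta_Q b\|_2^2
\,=\,\sum_{Q\in\Omega_1,\,Q\subseteq\widetilde Q}\|\Delta_Q(1_{\widetilde Q}\,b)\|_2^2
\,\le\,\sum_{Q\text{ dyadic}}\|\Delta_Q(1_{\widetilde Q}\,b)\|_2^2
\,\le\,\int_{\widetilde Q}|b|^2\,\le\, C_0|\widetilde Q|.
\]
Your more elaborate surrogate (freezing $b$ to $E_{P_j}b$ on the excluded cubes) and the accompanying BMO verification are unnecessary detours; the ``straddling'' obstacle you worry about does not actually arise for dyadic cubes, since any dyadic $R\subseteq\widetilde Q$ is either in $\Omega$ or entirely contained in some $P_j$. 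What your route buys in principle is that a BMO bound would suffice even without pointwise $L^2$ control on $\widetilde Q$, but here that extra generality is not needed because the restriction to $Q\in\Omega_1$ already forces $\widetilde Q\in\Omega$.
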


\begin{proof} We observe that
\begin{equation*} \sum_{\substack{Q\in \Omega_1\\ Q\subseteq 
\widetilde{Q}\subseteq Q_1}} \| \Delta _Q b\|^2_2 =
\sum_{\substack{Q\in \Omega_1\\Q\subseteq \widetilde{Q} 
\subseteq Q_1}} \| \Delta _Q (1_{\widetilde{Q}} \,b)\|^2_2\end{equation*}
is non-zero only if $\widetilde{Q}\in \Omega$. But $b\in 2-D\psi A(\Omega)$, so by \eqref{eq8.2} we have that
\begin{equation*}\sum_{Q\,\,\text{dyadic}} \| \Delta _Q (1_{\widetilde{Q}} \,b)\|^2_2 \leq 
C\int_{\widetilde{Q}} |b|^2\leq CC_0 |\widetilde{Q}|.\end{equation*}
This concludes the proof of Lemma~\ref{l8.15} and hence also that of Lemma~\ref{l8.14}.\end{proof}

\section{Proof of Theorem~\ref{t6.6} (Local $Tb$ Theorem for singular integrals)
\label{s8}}

We now proceed to give the proof of Theorem~\ref{t6.6}. The proof follows that of Theorem 6.8 in \cite{AHMTT}, which for the sake of expository simplicity treated only the case of 
``perfect dyadic" Calder\'on-Zygmund kernels in one dimension. The more general version given here, in which the ``perfect dyadic" cancellation condition is replaced by \eqref{eq6.3}(b), will entail dealing with a moderate amount of purely technical complication, but the gist of the proof is unchanged.

By the T1 theorem, plus a localization argument, 
it is enough to show that there is a constant $C$, depending only on dimension, the kernel bounds in \eqref{eq6.3}, and the constants in hypotheses (i), (ii) and (iii) of the 
Theorem, such that for every dyadic cube $Q$,
\begin{align*}\label{T1}\tag{T$1_{\loc}$}\text{(a)}&\quad \| T1_Q \|_{L^1(Q)} \leq C|Q|\\
\text{(b)}&\quad \| T^{\tr} 1_Q\|_{L^1(Q)} \leq C|Q|\notag\end{align*}
Indeed, it is well known that one may deduce both the weak boundedness property, 
and that $T1$, $T^{\tr}1\in BMO,$ 
from ($T1_{\loc}$),   \eqref{eq6.3} and \eqref{eq6.13}. We omit the details. 
In the sequel we shall use the generic $C$ to denote a constant depending only on the benign parameters listed above.

Now, by the symmetry of our hypotheses, it will suffice to establish only \eqref{T1}(b), and we do this for $Q$ contained in same fixed cube $Q_{  \bg}$. Since $Q_{  \bg}$ is arbitrary, the general case follows, as long as our constants are independent of $Q_{  \bg}$ (as they will be).

We thus fix $Q_{  \bg}$, and define
$$B_1\equiv \sup \frac{1}{|Q|} \| T^{\tr} 1_Q\|_{L^1(Q)},$$
where the supremum runs over all dyadic $Q\subseteq Q_{  \bg}$. By our qualitative hypothesis that $K\in L^\infty$, we see that $B_1<\infty$, although apparently it may depend on $\| K\|_\infty$ and $Q_{  \bg}$.  However, we shall show that there exists $\epsilon >0$, depending only on the allowable parameters, such that for every $Q\subseteq Q_{\bg}$, and for every $f\in L^\infty (Q)$ with $\| f\|_\infty \leq 1$, we have the estimate
\begin{equation}\label{eq8.16} |\int_QTf\,|\leq (1-\epsilon )B_1 |Q|+C|Q|.\end{equation}
By duality, this proves that $B_1\leq (1-\epsilon )B_1+C$, and \eqref{T1}(b) follows.

In the sequel, we shall use the following convenient notational convention:
\begin{equation*} \frac{1}{|Q|} \int_Q f=[f]_Q.\end{equation*}
By renormalizing, we may assume that hypothesis (iii) of the Theorem reads
\begin{equation}\label{eq8.17} [b^1_Q]_Q=1=[b^2_Q]_Q.\end{equation}

\begin{lemma}\label{l8.18} Suppose that $\{ b_Q\}$ satisfies (as in the hypotheses of Theorem~\ref{t6.6})
\begin{enumerate}\item[(i)] \quad$\int_Q|b_Q|^q\leq C|Q|$, for some $q>2$
\item[(ii)] \quad$\int_Q |Tb_Q|^2\leq C|Q|$
\item[(iii)] \quad$[b_Q]_Q=1$,\end{enumerate}
and that $\supp b_Q\subseteq Q$. Then there exists $\epsilon >0$, and for each fixed $Q_1$ a partition of $R_{Q_1}$ into
$$R_{Q_1}=\Omega _1 \cup \Omega_{ \buffer} \cup (\cup R_{P_j}),$$
where the tops $\{ P_j\}$ are non-overlapping dyadic sub-cubes of $Q_1$, such that if $b\equiv b_{Q_1}$, then

\begin{align}\label{eq8.19} & \quad \sum |P_j|\leq (1-\epsilon )|Q_1|\\
&\label{eq8.20}b\in q-D\psi A(\Omega_1 \cup \Omega_{ \buffer} )\\
&\label{eq8.21} \sup_{Q\subseteq \tilde{Q}\subseteq 2Q} [(Mb)^2]_{\tilde{Q}} \leq C,\end{align}
for all $Q\in \Omega_1\cup \Omega_{ \buffer}$ (here, $2Q$ denotes the concentric double of $Q$);
\begin{align}\label{eq8.22} &  [|Tb|^2]_Q \leq C,\quad \forall Q\in \Omega_1\cup \Omega_{ \buffer}\\
\label{eq8.23} & \qquad \sum_{Q\in \Omega_{ \buffer}} |Q|\leq C|Q_1|
\end{align} \begin{equation}
\label{eq8.24}  f=[f]_{Q_1}b+\sum_{Q\in \Omega_1}D^b_Qf+\sum_j (f1_{P_j}-[f]_{P_j} b_{P_j}) +\sum_{Q\in \Omega_{ \buffer}}\zeta_Q,\end{equation}
where
\begin{equation*}\zeta_Q\equiv S^b_Q f+\sum_{P_j \,\,\text{children of }Q} [f]_{P_j}b_{P_j},\end{equation*}
and, for $x\in Q'$, and $Q'$ a child of $Q\in \Omega_{ \buffer}$, 
\begin{equation*}S^b_Qf(x)\equiv \begin{cases} D^b_Q f(x), & x\in Q'\in (\Omega_1\cup \Omega_{\buffer})\\
-A^b_Q f(x), & x\in Q'\notin (\Omega_1 \cup \Omega_{ \buffer})\end{cases}.\end{equation*}
Furthermore $\int  \zeta_Q=0$, and $\|  \zeta_Q\|_2 \leq C|Q|^{1/2}$.\end{lemma}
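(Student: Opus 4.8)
The plan is to construct the stopping-time family $\{P_j\}$ by a Calder\'on–Zygmund-type stopping procedure adapted to $b = b_{Q_1}$, selecting a cube $P \subsetneq Q_1$ as a maximal stopping cube if one of several ``bad'' conditions is first violated at $P$: namely, if the accretivity lower bound $|[b]_P| \geq \delta$ fails, or if the local $L^q$ average $[|b|^q]_P$ exceeds a large constant, or if $[|Tb|^2]_P$ exceeds a large constant, or if one of the maximal-function Carleson-type averages in \eqref{eq8.21} is too large. Because $q > 2$ strictly, the $L^q$ average is a genuine improvement over the $L^2$ average built into hypothesis (i), and this slack is exactly what lets a weak-type / $L^2$ maximal estimate show that the union of the cubes selected because $[|b|^q]_P$ or $[(Mb)^2]_{\tilde Q}$ is large has total measure at most a small fraction of $|Q_1|$; similarly the cubes selected because $[|Tb|^2]_P$ is large are controlled in measure by hypothesis (ii) via Chebyshev, and the cubes where accretivity fails are controlled in measure against $\Re e\,[b]_{Q_1} = 1$ by a now-standard argument (sum of $[b]_{P}\,|P|$ over accretivity-stopping cubes has real part close to $0$, so since each such $[b]_P$ has small real part while $\sum_{\text{all }P_j}[b]_{P_j}|P_j|$ has real part close to $|Q_1|$, the accretivity-stopping cubes cannot fill up $Q_1$). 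Choosing the various threshold constants appropriately and then $\epsilon$ small gives \eqref{eq8.19}. By construction, $b$ satisfies \eqref{eq8.20}, \eqref{eq8.21} and \eqref{eq8.22} on the complement $\Omega = R_{Q_1}\setminus(\cup R_{P_j})$, hence on $\Omega_1 \cup \Omega_{\buffer}$, and \eqref{eq8.23} follows because every $Q \in \Omega_{\buffer}$ has a child among the $P_j$ (or outside $R_{Q_1}$), so $\sum_{Q\in\Omega_{\buffer}}|Q| \leq 2^n \sum_j |P_j| + (\text{boundary terms}) \leq C|Q_1|$.

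For the decomposition \eqref{eq8.24}, the idea is to telescope the adapted martingale expansion of $f1_{Q_1}$ down the tree $R_{Q_1}$, starting from $E^b_{Q_1}f = [f]_{Q_1}b$ (using the normalization $[b]_{Q_1} = 1$) and adding $\sum_k \Delta^b_k f$ restricted appropriately. Along cubes in $\Omega_1$ the operators $\Delta^b_Q$ are well-defined (all children lie in $\Omega$) and contribute $\sum_{Q\in\Omega_1} D^b_Q f$ — wait, one must be careful here: the statement uses $D^b_Q = (\Delta^b_Q)^{\tr}$ rather than $\Delta^b_Q$ itself, which reflects that we are really expanding in the transpose system because it is $\int_Q Tf$ (equivalently a pairing against $T^{\tr}1_Q$-type objects) that we ultimately wish to estimate; the telescoping identity for the $A^b_k$, $D^b_k$ operators is the analogue of \eqref{eq8.9}(f) noted in the paragraph before Proposition~\ref{p8.10}. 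When the telescoping reaches a stopping cube $P_j$, the remaining piece on $P_j$ is $f1_{P_j} - A^b_{P_j}(\text{incoming}) $, and since the incoming average on $P_j$ from the parent is $[f]_{P_j}$ (the previous-scale adapted average evaluated on $P_j$ — here one uses that at the stopping scale the relevant expectation is just the ordinary average, or one absorbs the discrepancy), this produces the term $f1_{P_j} - [f]_{P_j}b_{P_j}$ after re-centering with the \emph{child's} accretive function $b_{P_j}$ in place of $b$; the difference $[f]_{P_j}(b_{P_j} - (\text{restriction of }b))$ is a harmless mean-zero term that can be folded into the $\Omega_{\buffer}$ sum. The buffer cubes are precisely where the telescoping is ambiguous (some children in $\Omega$, some not), and there $\zeta_Q$ is defined piecewise by $S^b_Q$ to patch the two regimes together, plus the re-centering corrections from any stopping children.

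The remaining assertions about $\zeta_Q$ are the cleanest part: $\int \zeta_Q = 0$ because $\int D^b_Q f = 0$ and $\int A^b_Q f = \int (\text{const})\cdot b 1_{Q'}$ over a child, and the piecewise definition of $S^b_Q$ is arranged exactly so that the positive and negative contributions integrate to cancel the $\sum_{P_j\text{ child of }Q}[f]_{P_j}b_{P_j}$ terms; one checks this child-by-child. The $L^2$ bound $\|\zeta_Q\|_2 \leq C|Q|^{1/2}$ follows because on each child $Q'$ the function $D^b_Q f$ or $A^b_Q f$ is a constant times $b1_{Q'}$ (for $A^b_Q$) or a difference of adapted averages times $b1_{Q'}$ (for $D^b_Q$), all of these constants being $O(1)$ since $Q \in \Omega_1\cup\Omega_{\buffer}$ forces the accretivity bound $|[b]_Q|\geq\delta$ and the $L^q$ (hence $L^2$) bound $[|b|^q]_Q \leq C_0$; likewise $\|[f]_{P_j}b_{P_j}\|_2 \leq \|f\|_\infty |P_j|^{1/2}C \leq C|Q|^{1/2}$ using hypothesis (i) of Lemma~\ref{l8.18} for $b_{P_j}$. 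The main obstacle, I expect, is \textbf{not} any single estimate but the bookkeeping in establishing \eqref{eq8.24}: getting the telescoping identity to land exactly on the asserted four-term form — in particular handling the mismatch between re-centering a stopping child with $b_{P_j}$ versus with the ambient $b$, and verifying that every leftover mean-zero scrap is genuinely absorbed into $\zeta_Q$ for some $Q\in\Omega_{\buffer}$ — requires care, and it is the step where one must be most attentive to the difference between the perfect-dyadic setting of \cite{AHMTT} (where these identities are exact) and the present one.
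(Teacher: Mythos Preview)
Your approach is essentially the paper's: a stopping-time selection of the $P_j$ followed by an adapted martingale telescoping. Two points where your write-up is off, though neither is fatal. First, your packing argument for the accretivity-stopping cubes is garbled: the assertion ``$\sum_{\text{all }P_j}[b]_{P_j}|P_j|$ has real part close to $|Q_1|$'' is not true in general and is not what one proves. The correct (and direct) argument is
\[
|Q_1| = \Re e\!\int_{Q_1}\! b = \Re e\!\int_{\good}\! b + \Re e\!\int_{\bad_1}\! b + \Re e\!\int_{\bad_2}\! b,
\]
where $|\int_{\bad_1} b|\leq \delta|Q_1|$ by the accretivity failure, $|\bad_2|\leq C\delta^2|Q_1|$ by the weak-type maximal estimate, and Cauchy--Schwarz on the other two integrals then forces $|\good|\geq c|Q_1|$. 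Second, the ``main obstacle'' you anticipate in \eqref{eq8.24} is not one: that identity is purely dyadic and algebraic --- it never sees $T$ or the kernel $K$ --- so it holds exactly as in the perfect-dyadic case, with no ``difference term'' to absorb. On a bad child $P_j$ of $Q\in\Omega_{\buffer}$ one simply \emph{refrains} from the ill-defined step $A^b_{P_j}-A^b_Q$, records $-A^b_Q f$ on $P_j$ (this is $S^b_Q$), and compensates by the term $[f]_{P_j}b_{P_j}$ in $\zeta_Q$; the piece $f1_{P_j}-[f]_{P_j}b_{P_j}$ is then exactly the leftover. The Calder\'on--Zygmund tails matter only after one applies $T$ to the pieces, not in the decomposition itself. (A minor stylistic point: the paper merges your separate stopping conditions on $[|b|^q]_P$ and $[(Mb)^2]_{\tilde Q}$ into a single threshold on $[(Mb)^q]_{\tilde Q}$, which automatically delivers the $q$-$D\psi A$ bound since $|b|\leq Mb$; your version with separate thresholds works too.)
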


\begin{proof}[Proof of the lemma] We begin by verifying the claimed
properties of $\zeta_Q$, for $Q\in \Omega_{\buffer}$,
assuming \eqref{eq8.20}. By definition of 
$S^b_Q$,
\begin{equation*} \zeta_Q=\sum_{\substack{Q'\in\Omega\\ Q'\text{child of } Q}} \frac{b}{[b]_{Q'}} [f]_{Q'}1_{Q'}+ \sum_{\substack{Q'\notin\Omega\\ Q '\text{child of } Q}} [f]_{Q'}b_{Q'}- \frac{b}{[b]_Q} [f]_Q 1_Q ,\end{equation*}
where in the middle term we have used that  $[b_{Q'}]_{Q'}=1$, and that if $Q'$ is a child of $Q \in \Omega_{\buffer}$, with $Q'\notin \Omega \equiv \Omega_1 \cup \Omega_{\buffer}$, then $Q'=P_j$ for some $j$. It is now routine to verify that $\int\zeta_Q=0$, since $[b_{Q'}]_{Q'}=1$. Clearly, $\supp \zeta_Q\subseteq Q$. Also, the bound 
\begin{equation*} \| \zeta_Q\|_2\leq C\| f\|_\infty |Q|^{\frac{1}{2}} \leq C|Q|^{\frac{1}{2}}\end{equation*} follows from \eqref{eq8.20} and H\"older's inequality.

We now turn to the main part of the proof. By hypothesis (i) of the Lemma, applied to $b$ in $Q_1$, and by the $L^q$ boundedness of the maximal function, we have that
\begin{equation}\label{eq8.25} \int_{\mathbb{R}^n}(Mb)^q\leq C\int_{Q_1}|b|^q\leq C|Q_1|\end{equation}
where we have used that $b$ is supported in $Q_1$. We now perform a standard stopping time argument, subdividing $Q_1$ dyadically to extract a collection of sub-cubes $\{ P_j\}$ which are maximal with respect to the property that for some $\delta >0$ to be chosen, at least one of the following holds:
\begin{equation}\begin{split}\label{eq8.26} \text{(1)} & \quad |[b]_{P_j}|\leq \delta\\
\text{(2)}& \quad \sup_{\tilde{Q}:P_j\subseteq \tilde{Q}\subseteq 2P_j} [(Mb)^q]_{\tilde{Q}} +[|Tb|^2]_{P_j} \geq \frac{C}{\delta^2}\end{split}\end{equation}
As usual, we then set $\Omega \equiv R_{Q_1}\backslash (\cup R_{P_j})$, and we further decompose $\Omega=\Omega_1 \cup \Omega_{\buffer}$, where as above 
\begin{equation*} \Omega_{\buffer} =\{ Q\in \Omega :Q\text{ has at least one child not in } \Omega\}.\end{equation*}
Then \eqref{eq8.20}, \eqref{eq8.21} and \eqref{eq8.22} hold by construction. The representation \eqref{eq8.24} holds by definition of $D^b_Q$ and $S^b_Q$, by the normalization $[b_Q]_{Q}=1$, and by the telescoping nature of sums involving the $D^b_k$ operator. Furthermore, since each $Q\in \Omega_{\buffer}$ contains at least one bad child $P_j$,
we have that
\begin{equation*} \sum_{Q\in \Omega_{\buffer}}|Q|\leq 2^n\sum |P_j|\leq 2^n|Q_1|,\end{equation*}
which is \eqref{eq8.23}. It therefore remains only to verify \eqref{eq8.19}. To this end, we assign each ``bad" cube $P_j$ to a family $S_1$ or $S_2$, according to whether $P_j$ satisfies property (1) or (2) of \eqref{eq8.26}. If it happens to satisfy both of these inequalities, then we assign it arbitrarily to $S_1$. We then define
\begin{align*} \bad_1&=\cup_{P_j\in S_1} P_j,\quad \bad_2=\cup_{P_j\in S_2}P_j\\
\intertext{and}
\good &=Q_1\backslash (\bad_1\cup \bad_2).\end{align*}
Then by hypothesis (iii) of the lemma,
\begin{equation*}\begin{split}|Q_1|&=\int_{Q_1}b=\int_{\good} b+\int_{\bad_1}b+\int_{\bad_2}b\\
&\leq |\good |^{\frac{1}{2}} \| b\|_{L^2(Q_1)} +\delta \sum |P_j|+|\bad _2|^{\frac{1}{2}} \| b\|_{L^2(Q_1)},\end{split}\end{equation*}
where we have used \eqref{eq8.26}(1) to control the middle term. Now, by hypothesis (i) of the Lemma and H\"older's inequality, we have that $\| b\|_2 \leq C|Q_1|^{\frac{1}{2}}$, whence 
\begin{equation}\label{eq8.27} (1-\delta) |Q_1|\leq C|\good |^{\frac{1}{2}}|Q_1|^{\frac{1}{2}} +|\bad _2|^{\frac{1}{2}} |Q_1|^{\frac{1}{2}}.\end{equation}
Choosing $\delta>0$ sufficiently small, we will obtain the conclusion of the Lemma once we show that
$$|\bad_2|\leq C\delta^2|Q_1|.$$
To this end, we observe that by \eqref{eq8.26}(2) and the Hardy-Littlewood Theorem,
\begin{equation*}\begin{split} |\bad_2 |&\leq \left|\left\{ M(Mb)^q)>\frac{C}{2\delta^2}\right\}\right|+\left|\left\{ M(|Tb|^21_{Q_1})>\frac{C}{2\delta ^2}\right\}\right|\\
&\leq  C\delta^2\left( \int_{\mathbb{R}^n}(Mb)^q+\int_{Q_1}|Tb|^2\right)\,\leq\, C\delta^2|Q_1|,\end{split}\end{equation*}
as desired. This concludes the proof of Lemma~\ref{l8.18}.
\end{proof}

We now return to the proof of \eqref{eq8.16}. Fix a cube $Q_1$, and let $f$ be supported in $Q_1$, with $\| f\|_\infty \leq 1$. We apply Lemma~\ref{l8.18} in the cube $Q_1$, with $b_Q=b^1_Q$, $b=b^1_{Q_1}\equiv b_1$, so that we have a decomposition $R_{Q_1}=\Omega_1\cup \Omega_{\buffer} \cup (\cup P_j)$, for which \eqref{eq8.19}-\eqref{eq8.23} are satisfied, and furthermore $f$ may be decomposed as in \eqref{eq8.24}. We need to estimate $\left|\int_{Q_1} Tf\right|$, so by \eqref{eq8.24} it is enough to consider
\begin{equation*}\begin{split} |[f]_{Q_1} | \int_{Q_1} |Tb_1|&+\left|\sum_{Q\in \Omega_1}TD_Q^{b_1}f\right|
+\left| \sum_j\int_{Q_1}T(f1_{P_j}-[f]_{P_j}b^1_{P_j})\right|\\
&+ \left| \sum_{Q\in \Omega_{\buffer} }\int_{Q_1} T\zeta_Q\right|\, \equiv \,|\I|+|\II|+|\III |+|\IV |.\end{split}\end{equation*}
By hypothesis (ii) of Theorem~\ref{t6.6} and Cauchy-Schwarz, we have that
$$|\I|\leq C\| f\|_\infty |Q_1|\leq C|Q_1|.$$
Term II is the main term, and we defer its treatment momentarily. Next, we consider term III. For notational convenience, we set 
$$f_j\equiv f1_{P_j} -[f]_{P_j} b^1_{P_j}.$$
Since $[b^1_{P_j}]_{P_j}=1$, we have that $\int f_j=0$. Moreover, $\supp f_j\subseteq P_j$, and
\begin{equation}\label{eq8.28} \| f_j\|_2\leq C\| f\|_\infty |P_j|^{1/2}.\end{equation}
We now claim that
\begin{equation}\label{eq8.29}\III =\sum_j \int_{P_j} Tf_j+0 (\| f\|_\infty |Q_1|).\end{equation}
Indeed,
\begin{equation}\label{eq8.30} \int_{Q_1\backslash P_j} Tf_j=\int_{Q_1\backslash 2P_j} Tf_j+\int_{(Q_1\cap 2P_j)\backslash P_j}Tf_j.\end{equation}
The second term is dominated in absolute value by
\begin{equation*} C|P_j|^{\frac{1}{2}}\left( \int_{2P_j\backslash P_j}|Tf_j|^2\right)^{\frac{1}{2}} 
\leq C|P_j|^{\frac{1}{2}}\| f_j\|_2
\leq C\| f\|_\infty |P_j|,\end{equation*}
where the first inequality is essentially dual to \eqref{eq6.13}, by the kernel condition \eqref{eq6.3}(a) and the fact that $\supp f_j\subseteq P_j$, and the second inequality is just \eqref{eq8.28}. The first term in \eqref{eq8.30} may be handled by the classical Calder\'on-Zygmund estimate, using \eqref{eq6.3}(b) and the fact that $\int f_j=0$, and we obtain the bound
\begin{equation*}\begin{split} &C\iint_{|x-y|>C\ell (P_j)} \frac{\ell (P_j)^\alpha}{|x-y|^{n+\alpha}} |f_j(y)|dxdy\\
&\quad \leq C\| f_j\|_1\leq C|P_j|^{\frac{1}{2}} \| f_j\|_2\leq C\| f\|_\infty |P_j|.\end{split}\end{equation*}
Summing in $j$, we obtain \eqref{eq8.29}.

Thus, to finish our treatment of term III, we need only observe that
\begin{multline*}\left| \sum_j\int_{P_j}Tf_j\right| \leq \left| \sum_j \int_{P_j} T(f1_{P_j})\right| + \left| \sum_j \left( \int_{P_j}Tb^1_{P_j}\right) [f]_{P_j}\right|\\
\leq B_1 \| f\|_\infty \sum_j |P_j|+C\| f\|_\infty \sum |P_j|,\end{multline*}
where we have used the definition of $B_1$ and hypothesis (ii) of Theorem~\ref{t6.6}. From \eqref{eq8.19} and the normalization $\| f\|_\infty \leq 1$, we obtain the bound
$$|\III |\leq B_1 (1-\epsilon )|Q_1|+C|Q_1|.$$

We now consider term IV. By Lemma~\ref{l8.18} and the definition of $\zeta_Q$, we have that
$$\supp \zeta_Q \subseteq Q,\quad \int \zeta_Q=0, \,\,\text{and}\,\,\| \zeta_Q\|_2 \leq C|Q|^{1/2}.$$
Thus, from the same argument used to establish \eqref{eq8.29}, we obtain 
\begin{equation}\label{eq8.31} \IV =\sum_{Q\in \Omega_{\buffer}} \int_Q T\zeta_Q +O (|Q_1|),\end{equation}
where in the ``big $O$" term we have used \eqref{eq8.23}. We recall that $$\zeta_Q =S_Q^{b_1} f+\sum_{P_j\text{ children of } Q} [f]_{P_j} b^1_{P_j},$$ where 
for $x\in Q'$, with $Q'$ a child of $Q\in \Omega_{\buffer}$, we have either that
$$S^{b_1}_Q f(x)=-b_1 (x)\frac{\int_Qf}{\int_Q b_1},$$
if $Q'\notin \Omega \equiv (\Omega_1 \cup \Omega_{\buffer})$ (in which case we say that $Q'$ is a ``bad" child of $Q$) or 
$$S^{b_1}_Q f(x)=b_1 (x)\left[\frac{\int_{Q'}f}{\int_{Q'} b_1}-\frac{\int_Qf}{\int_Q b_1}\right],$$
if $Q'\in \Omega$ ($Q'$ is a ``good" child of $Q$).

Now, by \eqref{eq8.20}, $b_1 \in q-D\psi A (\Omega)$ (Definition~\ref{d8.13}), so that
\begin{multline}\label{eq8.32} \left| \int_Q T\zeta_Q\right| \leq \frac{C}{\delta} \left( \sum_{Q'\text{ good child of } Q} \left| \int_Q T(b_1 1_{Q'}) \right| + \left| \int_Q T(b_1 1_Q)\right| \right) \\
+ \sum_{Q'\text{ bad child of } Q} \left| \int_Q Tb^1_{Q'}\right|,\end{multline}
where in the last term we have used that the bad children of $Q$ are precisely those $P_j$ which are children of $Q$.

We shall estimate this last expression via the following

\begin{lemma}\label{l8.33} Suppose that $Q\subseteq Q_1$. Then with
$b_1 \equiv b^1_{Q_1}$, we have
$$\int_{3Q} |T(b_11_Q)|^2\leq C\int_{Q}|Tb_1|^2+\int_{2Q}|b_1|^2+\int_{Q}(M(b_1))^2$$ and similarly for $b^2_{Q_1}$, $T^{\tr}$.\end{lemma}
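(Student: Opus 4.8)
My plan is to split the region of integration as $3Q=Q\cup(3Q\setminus Q)$ and estimate the two pieces by entirely different devices. On the outer annulus $3Q\setminus Q$ the source $b_11_Q$ and the target region are disjoint and $3Q\setminus Q\subseteq 6Q\setminus Q$, so the estimate dual to \eqref{eq6.13} — legitimate since $K^{\tr}$ also satisfies \eqref{eq6.3a} — gives immediately $\int_{3Q\setminus Q}|T(b_11_Q)|^2\le C\int_Q|b_1|^2\le C\int_{2Q}|b_1|^2$.

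On $Q$ I would use the qualitative $L^2$ boundedness of $T$ and $\supp b_1\subseteq Q_1$ to write, as an identity of $L^2$ functions on $Q$,
\[
T(b_11_Q)=Tb_1-T(b_11_{2Q\setminus Q})-T(b_11_{NQ\setminus 2Q})-T(b_11_{(NQ)^c}),
\]
where $N=N(n)$ is a large dimensional constant and $NQ$ is the concentric $N$-fold dilate of $Q$. The first term yields $\int_Q|Tb_1|^2$. For the second, \eqref{eq6.13} itself (with $b_11_{2Q\setminus Q}$ in place of $f$, noting $2Q\setminus Q\subseteq 6Q\setminus Q$) gives $\int_Q|T(b_11_{2Q\setminus Q})|^2\le C\int_{2Q\setminus Q}|b_1|^2\le C\int_{2Q}|b_1|^2$. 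For the third, $|x-y|\simeq\ell(Q)$ whenever $x\in Q$ and $y\in NQ\setminus 2Q$, so \eqref{eq6.3a} gives the crude pointwise bound $|T(b_11_{NQ\setminus 2Q})(x)|\le C\ell(Q)^{-n}\int_{NQ}|b_1|\le C\,Mb_1(x)$ on $Q$, whence $\int_Q|T(b_11_{NQ\setminus 2Q})|^2\le C\int_Q(Mb_1)^2$.

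Everything therefore reduces to the tail term $\int_Q|T(b_11_{(NQ)^c})|^2$, which is exactly the point at which the Calder\'on-Zygmund tails — absent in the perfect dyadic setting — must be reckoned with. Choosing $N$ so large that $|x_Q-y|>2|x-x_Q|$ for every $x\in Q$ and $y\notin NQ$, where $x_Q$ is the centre of $Q$, the smoothness condition \eqref{eq6.3b} summed over the dyadic annuli about $x_Q$ shows that $T(b_11_{(NQ)^c})$ differs on $Q$ from the constant $c_Q:=T(b_11_{(NQ)^c})(x_Q)$ by at most $C\,Mb_1(x)$ pointwise; hence $\int_Q|T(b_11_{(NQ)^c})|^2\le 2|c_Q|^2|Q|+C\int_Q(Mb_1)^2$, and only $|c_Q|^2|Q|$ is left. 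Averaging the $L^2$ identity $Tb_1=T(b_11_Q)+T(b_11_{2Q\setminus Q})+T(b_11_{NQ\setminus 2Q})+T(b_11_{(NQ)^c})$ over $Q$ and re-using the oscillation bound and the two estimates just obtained, one gets $|c_Q-[Tb_1]_Q|\le|[T(b_11_Q)]_Q|+C\bigl(|Q|^{-1}\int_{2Q}|b_1|^2+|Q|^{-1}\int_Q(Mb_1)^2\bigr)^{1/2}$, while $|[Tb_1]_Q|\le(|Q|^{-1}\int_Q|Tb_1|^2)^{1/2}$ by Cauchy-Schwarz. Hence the whole lemma comes down to the single estimate
\[
|\langle T(b_11_Q),1_Q\rangle|^{2}\le C\,|Q|\Bigl(\int_Q|Tb_1|^2+\int_{2Q}|b_1|^2+\int_Q(Mb_1)^2\Bigr).
\]

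This last bound is the main obstacle: the crude $|\langle T(b_11_Q),1_Q\rangle|^2\le|Q|\int_Q|T(b_11_Q)|^2$ merely re-creates the quantity being bounded, so here — and only here — one is forced to use the pseudo-accretivity of $b^1$. I would split $1_Q=[b_1]_Q^{-1}b_11_Q-[b_1]_Q^{-1}(b_1-[b_1]_Q)1_Q$ on $Q$, pair against $T(b_11_Q)$, treat the accretive piece by comparing $b_11_Q$ with the accretive test function $[b_1]_Qb^1_Q$ — for which $\int_Q|Tb^1_Q|^2\le C|Q|$ by hypothesis (ii) and $\int_Q|b^1_Q|^q\le C|Q|$ by hypothesis (i) — and control the mean-zero remainder $(b_1-[b_1]_Q)1_Q$ by exploiting its vanishing mean, via \eqref{eq6.3b} and the estimates already in hand; where $|[b_1]_Q|$ is too small for this split one argues directly, using that then $|[b_1]_Q|^2|Q|\le\int_Q(Mb_1)^2$ so that the constant part is already absorbed. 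Finally, the assertion for $b^2_{Q_1}$ and $T^{\tr}$ follows verbatim, the hypotheses being symmetric under $T\leftrightarrow T^{\tr}$.
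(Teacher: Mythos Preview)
Your reduction is sound up to the very last step: the annulus $3Q\setminus Q$, the decomposition $Tb_1 - T(b_11_{2Q\setminus Q}) - T(b_11_{NQ\setminus 2Q}) - T(b_11_{(NQ)^c})$, and the oscillation bound on the tail all work, and together they show $\|T(b_11_Q)-[T(b_11_Q)]_Q\|_{L^2(Q)}\le C\sqrt\beta$. This is exactly equivalent to what the paper obtains (it does it by duality, testing against mean-zero $h$), so at this point both arguments have reduced the lemma to controlling a single number: the paper must bound $|Q|^{-1/2}|\langle T(b_11_Q),b^2_Q\rangle|$, you must bound $|Q|^{-1/2}|\langle T(b_11_Q),1_Q\rangle|$.

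The gap is in your treatment of that number. You try to use only the $b^1$ system, but this cannot close. Your split $1_Q=[b_1]_Q^{-1}b_11_Q-[b_1]_Q^{-1}(b_1-[b_1]_Q)1_Q$ produces, for the mean-zero piece, the bound $|[b_1]_Q|^{-1}\|T(b_11_Q)-[T(b_11_Q)]_Q\|_{L^2(Q)}\|b_1\|_{L^2(Q)}\le C|[b_1]_Q|^{-1}\beta$, and there is no reason this is $\le C|Q|^{1/2}\sqrt\beta$; the lemma is stated for \emph{arbitrary} dyadic $Q\subseteq Q_1$, with no lower bound on $[b_1]_Q$. Your fallback for small $|[b_1]_Q|$ is also wrong: the constant $c_Q=T(b_11_{(NQ)^c})(x_Q)$ is a far-field quantity and is in no way controlled by $|[b_1]_Q|$; the inequality $|[b_1]_Q|^2|Q|\le\int_Q(Mb_1)^2$ is true but irrelevant. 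For the ``accretive piece'' $[b_1]_Q^{-1}\langle T(b_11_Q),b_11_Q\rangle$, any comparison with $b^1_Q$ eventually forces you to bound either $\langle g,T^{\tr}b^1_Q\rangle$ or $\langle g,T^{\tr}(b_11_Q)\rangle$ for some mean-zero $g$ supported in $Q$ --- and hypothesis (ii) gives you no control whatsoever over $T^{\tr}b^1_Q$ or $T^{\tr}(b_11_Q)$.

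The missing idea is to use the \emph{dual} accretive system. The paper pairs $T(b_11_Q)$ with $b^2_Q$ (Lemma~\ref{l8.35}), so that dualizing gives $\langle b_11_Q,T^{\tr}b^2_Q\rangle$, and now $\|T^{\tr}b^2_Q\|_{L^2(Q)}\le C|Q|^{1/2}$ is available directly from hypothesis (ii). Since $[b^2_Q]_Q=1$, controlling this pairing together with the oscillation (which you already have) controls $\|T(b_11_Q)\|_{L^2(Q)}$. In short: your argument is correct and efficient right up to the mean, but the mean genuinely needs $b^2_Q$, not $b^1$.
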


Let us take the lemma for granted momentarily. In \eqref{eq8.32}, $Q'$ is a child of $Q$, hence the concentric triple $3Q'$ contains $Q$. Moreover, the ``good" children, being in $\Omega$, satisfy \eqref{eq8.21} and \eqref{eq8.22}, with $b=b_1$. Consequently, we may apply the lemma to $Q'\subseteq Q_1$ or to $Q\subseteq Q_1$ in the first two terms on the right side of \eqref{eq8.32} to obtain the bound
\begin{equation*}\frac{C}{\delta} \left( \int_Q |Tb_1|^2+\int_{2Q}|b_1|^2+\int_Q(M(b_1))^2\right) \leq \frac{C}{\delta} |Q|.\end{equation*}
In addition, the last term in \eqref{eq8.32} is no larger then
$$\sum_{Q'}\left( \left| \int_{Q\backslash Q'} Tb^1_{Q'}\right| +\left|\int_{Q'}Tb^1_{Q'}\right|\right) \leq C\sum_{Q'} |Q'|\leq C|Q|,$$
by the dual estimate to \eqref{eq6.13}, plus hypotheses (i) and (ii) of Theorem~\ref{t6.6}. Since $\delta >0$ is fixed, summing over $Q$ in $\Omega_{\buffer}$ yields that
$$|\IV\|\leq C|Q_1|,$$
by \eqref{eq8.23}.

Combining our estimates for I, III and IV, we have therefore proved that
\begin{equation}\label{eq8.34}\left| \int_{Q_1} Tf\right| \leq |\II |+C|Q_1| +B_1 (1-\epsilon) |Q_1|,\end{equation}
modulo the proof of Lemma~\ref{l8.33}, which we shall give now, before embarking on our treatment of the math term II.

\begin{proof}[Proof of Lemma~\ref{l8.33}] The proof is based on another Lemma.

\begin{lemma}\label{l8.35} For all dyadic $Q$, and for every $f\in L^2(Q)$, we have that
$$\| f\|_{L^2(Q)} \leq C\left(\| f -[f]_Q \|_{L^2(Q)} +|Q|^{-\frac{1}{2}} |\langle f,b^2_Q\rangle |\right),$$
and similarly for $b^1_Q$.\end{lemma}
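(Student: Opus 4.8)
The plan is to exploit hypothesis (i) of Theorem~\ref{t6.6} (with $q>2$) together with the normalization $[b^2_Q]_Q=1$ from \eqref{eq8.17}. Write $f=(f-[f]_Q)+[f]_Q$, so that by the triangle inequality it suffices to control $|[f]_Q|\,|Q|^{1/2} = \|[f]_Q\|_{L^2(Q)}$ by the right-hand side. The key observation is that, since $[b^2_Q]_Q=1$,
\begin{equation*}
\langle f,b^2_Q\rangle = \int_Q f\,\overline{b^2_Q} = \int_Q (f-[f]_Q)\,\overline{b^2_Q} + [f]_Q\int_Q \overline{b^2_Q} = \int_Q (f-[f]_Q)\,\overline{b^2_Q} + [f]_Q\,|Q|,
\end{equation*}
so that
\begin{equation*}
[f]_Q\,|Q| = \langle f,b^2_Q\rangle - \int_Q (f-[f]_Q)\,\overline{b^2_Q}.
\end{equation*}
(One should be a little careful about complex conjugates and whether one uses the real part, but this is a cosmetic point; alternatively replace $b^2_Q$ by its conjugate throughout.)

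Next I would estimate the second term on the right by Cauchy--Schwarz and then hypothesis (i): by Hölder with exponents $2$ and $2$,
\begin{equation*}
\left| \int_Q (f-[f]_Q)\,\overline{b^2_Q}\right| \leq \|f-[f]_Q\|_{L^2(Q)}\,\|b^2_Q\|_{L^2(Q)} \leq C|Q|^{1/2}\,\|f-[f]_Q\|_{L^2(Q)},
\end{equation*}
where the last step uses hypothesis (i) of Theorem~\ref{t6.6} (which gives $\int_Q|b^2_Q|^q\leq C|Q|$ for some $q>2$) together with Hölder's inequality to pass from $L^q$ to $L^2$ control, yielding $\|b^2_Q\|_{L^2(Q)}\leq C|Q|^{1/2}$. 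Dividing the displayed identity for $[f]_Q|Q|$ by $|Q|^{1/2}$ then gives
\begin{equation*}
|[f]_Q|\,|Q|^{1/2} \leq |Q|^{-1/2}|\langle f,b^2_Q\rangle| + C\|f-[f]_Q\|_{L^2(Q)},
\end{equation*}
and combining with $\|f\|_{L^2(Q)}\leq \|f-[f]_Q\|_{L^2(Q)} + |[f]_Q|\,|Q|^{1/2}$ finishes the proof. The statement for $b^1_Q$ is identical with $b^2_Q$ replaced by $b^1_Q$.

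There is essentially no obstacle here: this is a soft functional-analytic fact, the "reproducing" role of the accretivity condition $[b_Q]_Q=1$ being exactly that it lets one recover the mean of $f$ from the pairing $\langle f,b_Q\rangle$ up to an oscillation error. The only point requiring the slightest attention is the use of $q>2$ rather than $q=2$ — but since we only need $\|b^2_Q\|_{L^2(Q)}\leq C|Q|^{1/2}$, even $q=2$ would suffice, so this lemma does not use the full strength of the hypothesis. I expect the lemma to be used in the proof of Lemma~\ref{l8.33} precisely to convert an $L^2$ bound modulo constants into a genuine $L^2$ bound on $T(b_1 1_Q)$, by applying it with $f = T(b_1 1_Q)$ and controlling $\langle T(b_1 1_Q), b^2_{Q}\rangle$ via the weak boundedness / $Tb$-type estimates already in hand.
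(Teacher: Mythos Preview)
Your argument is correct and uses the same ingredients as the paper: the normalization $[b^2_Q]_Q=1$, the bound $\|b^2_Q\|_{L^2(Q)}\leq C|Q|^{1/2}$ from hypothesis~(i), and Cauchy--Schwarz. The only difference is presentational: the paper argues by duality, testing against an arbitrary $h\in L^2(Q)$ with $\|h\|_2=1$ and decomposing $h=(h-[h]_Q b^2_Q)+[h]_Q b^2_Q$ (the first piece having mean zero), whereas you decompose $f$ directly and recover $[f]_Q$ from the pairing. Your route is marginally more direct; the mathematical content is identical, and your remark about the conjugation convention and about how the lemma is applied to $f=T(b_1 1_Q)$ in Lemma~\ref{l8.33} is on target.
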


We first show that this lemma yields Lemma~\ref{l8.33}. By the dual estimate to \eqref{eq6.13}, we have that
$$\int_{3Q\backslash Q} |T(b_1 1_Q)|^2\leq C\int_Q|b_1|^2.$$
Thus, it suffices to show that $\int_{Q} |T(b_1 1_Q)|^2\leq \beta$, where
$$\beta \equiv \int_Q |Tb_1 |^2+\int_{2Q} |b_1 |^2+\int_Q M(b_1)^2.$$
We note that
\begin{equation*}|\langle T(b_1 1_Q),b^2_Q\rangle |= |\langle b_1 1_Q,T^{\tr} b^2_Q \rangle |
\leq \| b_1 \| _{L^2(Q)} \, \| T^{\tr} b^2_Q\| _{L^2(Q)}
\,\leq\, C|Q|^{1/2 }\,\| b_1\| _{L^2(Q)},\end{equation*}
by hypothesis (ii) of Theorem~\ref{t6.6}. Thus, by Lemma~\ref{l8.35}, with $f=T(b_1 1_Q)$, it suffices to show that
\begin{equation}\label{eq8.36} \| f-[f]_Q\|_{L^2(Q)} \leq C\sqrt{\beta}.\end{equation}
In turn, \eqref{eq8.36} will follow if we can show that, for all $h\in L^2(Q)$ with $\int_Q h=0$, we have
$$|\langle f,h\rangle | \leq C\| h\|_2 \sqrt{\beta}.$$
But
\begin{equation*} \langle f,h\rangle =\langle b_1 1_Q, T^{\tr }h\rangle 
= \langle b_1, T^{\tr} h\rangle -\langle b_11_{(2Q\backslash Q)}, 
T^{\tr} h\rangle -\langle b_1 1_{(2Q)c} T^{\tr} h\rangle
\equiv U+V+W.\end{equation*}
Now
$$|U|\leq \| Tb_1 \| _{L^2(Q)} \| h\|_{L^2(Q)} \leq C\sqrt{\beta} \| h\|_2.$$
Moreover, we have that
$$|V|\leq \| b_1 \|_{L^2(2Q)} \| T^{\tr }h\|_{L^2(2Q\backslash Q)} \leq C\sqrt{\beta} \| h\|_2,$$
where we have used the dual estimate to \eqref{eq6.13} in the last step. Finally, since $\int h=0$, we have by the standard Calder\'on-Zygmund estimate that
\begin{equation*} |W|\leq 
\int_{(2Q)^c} |b_1(y)|\int_Q|h(x)|\frac{\ell (Q)^\alpha}{|x-y|^{n+\alpha}} dxdy
\leq \int_Q |h(x)|\, M(b_1)(x)\, dx\leq C\| h\|_2 \sqrt{\beta}.\end{equation*}
Thus, Lemma~\ref{l8.35} implies Lemma~\ref{l8.33}. \end{proof}

We now give the
\begin{proof}[Proof of Lemma~\ref{l8.35}] Let $h\in L^2(Q)$, with $\| h\|_2=1$. Then
\begin{equation*}\langle f,h\rangle =\langle f,h-[h]_Q b^2_Q\rangle +[h]_Q\langle f,b^2_Q\rangle
= \langle f-[f]_Q , h-[h]_Q b^2_Q\rangle +[h]_Q \langle f,b^2_Q\rangle ,\end{equation*}
where we have used that $\int_Q(h-[h]_Qb^2_Q)=0$, since $[b^2_Q]_Q = 1$. Thus, by Cauchy-Schwarz,
$$|\langle f,h\rangle |\leq \| f-[f]_Q\|_{L^2(Q)} \left(1+|[h]_Q|\,\| b^2_Q\|_2\right)+|[h]_Q|\, |\langle f,b^2_Q\rangle |.$$
But $$|[h]_Q|\leq \left(\frac{1}{|Q|} \int_Q |h|^2\right)^{1/2} \leq |Q|^{-\frac{1}{2}},$$ and by hypothesis (i), $\| b^2_Q\|_2 \leq C|Q|^{1/2}$. The conclusion of the lemma now follows readily.\end{proof}

Next, we return to \eqref{eq8.34}, and more precisely, to the term
$$\II =\sum_{Q\in \Omega_1} \int_{Q_1} TD^{b_1}_Q f,$$
where $f$ is supported in $Q_1$, and $\| f\|_\infty \leq 1$. Having established \eqref{eq8.34}, we must now show that $|\II |\leq C|Q_1|$, whence \eqref{eq8.16} follows, since $Q_1$ is arbitrary. But
$$\II =\sum_{Q\in \Omega_1} \langle \Delta_Q^{b_1} T^{\tr} 1_{Q_1} ,D^{b_1}_Q f\rangle,$$
because $(D^{b_1}_Q)^2=D^{b_1}_Q$, and $(D^{b_1}_Q)^{\tr}=\Delta^{b_1}_Q$.
Thus
\begin{equation}\label{eq8.37} | \II |\leq \left( \sum_{Q\in \Omega_1} \| D^{b_1}_Q f\|^2_{L^2(Q)} \right)^{\frac{1}{2}} \left( \sum_{Q\in \Omega_1} \| \Delta^{b_1}_Q T^{\tr} 1_{Q_1}\|^2_{L^2(Q)}\right) ^{\frac{1}{2}}.\end{equation}
Since $b_1$ satisfies \eqref{eq8.20}, we have by Lemma~\ref{l8.14} that the first factor on the right side of \eqref{eq8.37} is bounded by $C\| f\|_{L^2(Q_1)}\leq C|Q_1|^{1/2}$. It is therefore enough to show that the second factor is also dominated by $C|Q_1|^{1/2}$. More generally, setting
$$B_2\equiv \sup_{Q_2\subseteq Q_1} \frac{1}{|Q_2|} \sum_{Q\in \Omega_1\cap R_{Q_2}} \| \Delta^{b_1}_QT^{\tr}1_{Q_1}\|^2_{L^2(Q)},$$
we shall show that $B_2\leq C$. More precisely, for $Q_2 \subseteq Q_1$ now fixed, we shall show that
\begin{equation}\label{eq8.38} \sum_{Q\in \Omega_1\cap R_{Q_2}} \| \Delta^{b_1}_Q T^{\tr} 1_{Q_1} \|^2_{L^2(Q)} \leq (1-\epsilon )B_2 |Q_2|+C|Q_2|.\end{equation}
Once \eqref{eq8.38} is established, we shall be done. To this end, we decompose $R_{Q_2}$ as in Lemma~\ref{l8.18}, with respect to $b=b^2_{Q_2} \equiv b_2$. In particular, $R_{Q_2}=\Omega_2 \cup \Omega_{2,\buffer} \cup (\cup R_{P^2_i})$, where
$$ \sum |P^2_i|\leq(1-\epsilon )|Q_2|,\quad \sum_{Q\in \Omega_{2,\buffer}} |Q|\leq C|Q_2|,$$
and $b_2\in q-D\psi A$ on $\Omega_2 \cup \Omega_{2,\buffer}$. The left hand side of \eqref{eq8.38} then splits into
$$\sum_{Q\in \Omega_1\cap \Omega_2} +\sum_{Q\in \Omega_1\cap \Omega_{2,\buffer}} +\sum_i \sum_{Q\in \Omega_1 \cap R_{P^2_i}} \equiv \Sigma_1+\Sigma_2+\Sigma_3.$$
Now, by definition of $B_2$,
\begin{equation*} \Sigma_3\equiv \sum_i \sum_{Q\in 
\Omega_1\cap R_{P^2_i}} \| \Delta_Q^{b_1} T^{\tr} 1_{Q_1} \|^2_{L^2(P^2_i)}
\leq B_2 \sum_i |P^2_i|\leq B_2 (1-\epsilon )|Q_2|.\end{equation*}
Next, we consider $\Sigma_2$. For $Q\in \Omega_1 \cap \Omega_{2,\buffer}$, we write $1_{Q_1}=1_{Q_1\backslash 2Q} +1_{(Q_1\cap 2Q)\backslash Q} +1_Q.$ Since 
$b_1\in q-D\psi A (\Omega_1 \cup \Omega _{\buffer})$, we have that $\Delta^{b_1}_Q :L^2(Q)\to L^2(Q)$. Thus, using also \eqref{eq6.13}, we obtain
$$\| \Delta_Q ^{b_1} T^{\tr} 1_{(Q_1\cap 2Q)\backslash Q} \| ^2_2 \leq C\| 1_{2Q\backslash Q} \|^2_2 \leq C|Q|.$$
Summing this term over $Q\in \Omega_{2,\buffer}$ yields the bound $C|Q_2|$ as desired. Also $\Delta_Q^{b_1}1=0$. Thus, if we denote by $\varphi^{b_1}_Q(x,y)$ the kernel of $\Delta ^{b_1}_Q$, we have by \eqref{eq6.3}(b) that
\begin{equation*} |\Delta^{b_1}_Q T^{\tr} 1_{Q_1 \backslash 2Q}(x)|\leq C\int |\varphi^{b_1}_Q (x,y)|\,dy\int_{|z-y_Q|>c\ell (Q)}\frac{\ell(Q)^\alpha}{|z-y_Q|^{n+\alpha}}dz
\leq C,\end{equation*}
where $y_Q $ is the center of $Q$. Therefore
$$\| \Delta^{b_1}_QT^{\tr} 1_{Q_1\backslash 2Q} \|^2_{L^2(Q)} \leq C|Q|,$$
and we can again sum over $Q\in \Omega_{2,\buffer}$ to obtain the bound $C|Q_2|$.

To finish our treatment of $\Sigma_2$, it remains to consider the contribution of $1_Q$. By definition,
\begin{equation*}\begin{split} \varphi_Q^{b_1}(x,y)&=-1_Q(x)1_Q(y)\frac{1}{|Q| }\frac{b_1(y)}{[b_1]_Q}+\sum_{Q'\text{ children of } Q} 1_{Q'} (x)1_{Q'} (y) \frac{1}{|Q'|} \frac{b_1 (y)}{[b_1]_{Q'}}\\
&\equiv \lambda^{b_1}_Q (x,y)\,b_1(y).\end{split}\end{equation*}
Then,
\begin{equation*} \Delta^{b_1}_Q T^{\tr} 1_Q(x)=\langle \lambda_Q^{b_1}(x,\cdot )b_1, T^{\tr} 1_Q\rangle =\int_Q T(b_1\lambda_Q^{b_1}(x,\cdot )).\end{equation*}
Since $x\in Q$ (otherwise $\lambda^{b_1}_Q =0$), we have that by definition of $\lambda^{b_1}_Q$, the last expression equals
\begin{equation*}\sum_{Q'\text{ children of } Q } 1_{Q'}(x) \left( \int_Q T(b_1 1_{Q'})\right) \frac{1}{|Q'|} \frac{1}{[b_1]_{Q'}}
 \,-\, \left( \int_Q T(b_1 1_Q)\right) \frac{1}{|Q|}\frac{1}{[b_1]_Q}.\end{equation*}
Since $Q\in \Omega_1$, we have that $b_1 \in q-D\psi A$ on $Q$ and all of its children, so that 
$$|[b_1]_Q|,\,|[b_1]_{Q'}|\geq \delta.$$ Consequently,
\begin{equation*}\begin{split} \| \Delta_Q^{b_1}T^{\tr} 1_Q\| _{L^\infty (Q)}&\leq C \sum_{Q'\text{ children of } Q }\left( \frac{1}{|Q|}\int_Q |T(b_1 1_{Q'})|^2 \right)^{\frac{1}{2}} +\,C \left( \frac{1}{|Q|} \int_Q |T(b_11_Q )|^2\right)^{\frac{1}{2}}\\
&\leq C|Q|^{-\frac{1}{2}} \left(\| T b_1 \|_{L^2(Q)} +\| b_1\|_{L^2(2Q)} +\| Mb_1\|_{L^2(Q)}\right)
\leq C,\end{split}\end{equation*}
where we have used Lemma~\ref{l8.33}, and then estimates \eqref{eq8.21} and \eqref{eq8.22}, in the last two inequalitities. Thus, $\| \Delta^{b_1}_Q T^{\tr} 1_Q \| ^2_{L^2(Q)} \leq C|Q|$, and summation over $Q\in \Omega_{2,\buffer}$ completes the estimate
$$\Sigma_2 \leq C|Q_2|.$$

This leaves $\Sigma_1$. That is, we need to prove 
\begin{equation}\label{eq8.39} \sum_{Q\in \Omega_1 \cap \Omega_2} \| \Delta^{b_1}_Q T^{\tr} 1\|^2_2 \leq C|Q_2|,\end{equation}
where we have replaced $1_{Q_1}$ by $1$ in the definition of $\Sigma_1$. Indeed, the error may be controlled by a well-known argument of Fefferman and Stein \cite{FS}, since $\Delta^{b_1}_Q 1=0$, and the kernel of $T^{\tr} $ obeys \eqref{eq6.3}. Combining \eqref{eq8.39} with our estimates for $\Sigma_2$ and $\Sigma_3$, we obtain \eqref{eq8.38}, and thus also the conclusion of Theorem~\ref{t6.6}.

We now proceed to prove \eqref{eq8.39}. We fix $k$ such that $Q\in \mathbb{D}_k$. We begin
 by observing that for $Q\in \Omega_2 \cap \mathbb{D}_k$, we have that
 \begin{equation*} |\Delta_Q^{b_1}T^{\tr}1|\leq \frac{1}{\delta} |(\Delta^{b_1}_Q T^{\tr}1)[b_2]_Q|
 = \frac{1}{\delta} |\Delta_Q^{b_1} T^{\tr} 1E_k b_2|\end{equation*}
 where in the last step we have used that $\Delta_Q^{b_1}T^{\tr}1$ is supported in $Q$, by definition of $\Delta^{b_1}_Q$. We now use a variant of a trick of Coifman and Meyer \cite{CM}, to write
\begin{equation}\begin{split}\label{eq8.40} (\Delta _Q^{b_1}T^{\tr} 1) E_k &= \left\{ (\Delta _Q^{b_1}T^{\tr} 1)E_k -\Delta^{b_1}_QT^{\tr} E_k\right\} +\Delta _Q^{b_1} T^{\tr} (E_k-I)+ \Delta^{b_1}_Q T^{\tr}\\
&\equiv T_{Q,1} +T_{Q,2}+T_{Q,3}.\end{split}\end{equation}
 It is therefore enough to establish \eqref{eq8.39} with $\Delta_Q^{b_1} T^{\tr}1$ replaced by each of $T_{Q,1}b_2$, $T_{Q,2}b_2$ and $T_{Q,3}b_2$.
 
 The contribution of the latter term is easy to handle. To this end, we define an operator $\Lambda_Q^{b_1}$ by the relationship
 $$\Lambda^{b_1}_Q (b_1g)\equiv \Delta^{b_1}_Q g,$$
 i.e. if $\varphi^{b_1}_Q (x,y)$ denotes the kernel of $\Delta^{b_1}_Q$, and, as above
 $$\varphi^{b_1}_Q (x,y)=\lambda^{b_1}_Q (x,y)\,b_1(y),$$
 then $$\Lambda_Q^{b_1}g(x)=\int \lambda^{b_1}_Q (x,y)g(y)\,dy.$$
 We shall prove the following.
 
 \begin{lemma}\label{l8.41} Suppose that $Q_2 \subseteq Q_1$.
Let $b_1,b_2 \in q -D\psi A$ on dyadic sawtooth regions $\Omega_1 \cup \Omega_{\buffer}, \Omega_2 \cup \Omega_{2,\buffer}$, respectively.
Define $C_2\equiv \sup_{Q\in \Omega_2\cup \Omega_{2,\buffer}}[|b_2|^2]_Q$. Then 
 $$\sum_{Q\in \Omega_2\cap \Omega_1} \| \Lambda^{b_1}_Q (b_2g)\| ^2_2 \leq CC_2 \| g\|^2_{L^2(Q_2)}.$$ \end{lemma}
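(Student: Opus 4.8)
The plan is to reduce the estimate for $\Lambda^{b_1}_Q(b_2 g)$ to the square function bound of Lemma~\ref{l8.14} together with the Carleson measure estimate of Lemma~\ref{l8.15}, by recognizing $\Lambda^{b_1}_Q$ as essentially the adapted difference operator $D^{b_1}_Q$ after a change of variables. Recall that $\varphi^{b_1}_Q(x,y)=\lambda^{b_1}_Q(x,y)b_1(y)$ is the kernel of $\Delta^{b_1}_Q$, so that $\Lambda^{b_1}_Q g(x)=\int\lambda^{b_1}_Q(x,y)g(y)\,dy$; equivalently $\Lambda^{b_1}_Q(b_1 h)=\Delta^{b_1}_Q h$, and taking transposes, the kernel of $(\Lambda^{b_1}_Q)^{\tr}$ is $\lambda^{b_1}_Q(y,x)$. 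Writing out $\lambda^{b_1}_Q$ explicitly from the formula in the text, for $x\in Q'$ a child of $Q\in\mathbb{D}_k$ one has, schematically, $\Lambda^{b_1}_Q(b_2 g)(x) = \frac{1}{[b_1]_{Q'}}\,[b_2 g]_{Q'} - \frac{1}{[b_1]_Q}\,[b_2 g]_Q$, i.e. $\Lambda^{b_1}_Q(b_2 g) = D^{b_1}_Q(b_2 g)$ restricted to $Q$ with the role of $b$ in the denominators played by $b_1$ and the numerator integrand being $b_2 g$.

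Concretely, first I would set $G\equiv b_2 g$ on $Q_2$ and observe that $G\in L^2(Q_2)$ with $\|G\|_{L^2(Q_2)}\le C_2^{1/2}\|g\|_{L^\infty}$... but since $g$ need not be bounded, the cleaner route is to keep $g$ as is and expand the square exactly as in the proof of Lemma~\ref{l8.14}: for $Q\in\mathbb{D}_k\cap(\Omega_1\cap\Omega_2)$,
$$\|\Lambda^{b_1}_Q(b_2 g)\|_2^2 = \sum_{\substack{Q'\in\mathbb{D}_{k+1}\\ Q'\subseteq Q}} \left|\frac{[b_2 g]_{Q'}}{[b_1]_{Q'}} - \frac{[b_2 g]_Q}{[b_1]_Q}\right|^2 |Q'|.$$
Then I would write the difference inside as a ``product-rule'' combination: adding and subtracting $[b_2 g]_Q/[b_1]_{Q'}$ gives a term $\Delta_Q([b_2 g]\,\text{-like})/[b_1]_{Q'}$ plus a term $[b_2 g]_Q\,\Delta_Q(1/[b_1])$-like, exactly parallel to the splitting $|\Delta_Qf|^2 + |E_Qf|^2|\Delta_Q b|^2$ in Lemma~\ref{l8.14}. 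Since $Q\in\Omega_1$ means all children of $Q$ lie in $\Omega_1\cup\Omega_{\buffer}$, pseudo-accretivity of $b_1$ gives $|[b_1]_{Q'}|,|[b_1]_Q|\ge\delta$, so both denominators are harmless and we obtain
$$\|\Lambda^{b_1}_Q(b_2 g)\|_2^2 \le \frac{C}{\delta^2}\int_Q\Big(|\Delta_Q(E_k(b_2 g))|^2 + |E_Q(b_2 g)|^2\,|\Delta_Q b_1|^2\Big),$$
where by $\Delta_Q(E_k(b_2 g))$ I mean the dyadic martingale difference of the averages of $b_2 g$, i.e. $\Delta_k(b_2 g)$ localized to $Q$.

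The two resulting sums are then handled separately. For $\sum_{Q}\int_Q|\Delta_k(b_2 g)|^2$: since $Q\in\Omega_2$ forces $Q\subseteq Q_2$, and since $\Delta_k(b_2 g)$ is supported where we sum, this is at most $\sum_{k}\|\Delta_k(1_{Q_2}b_2 g)\|_2^2 \le \|1_{Q_2} b_2 g\|_2^2$ by \eqref{eq8.2} — but this has the wrong form, with $b_2 g$ rather than $g$ in $L^2$. The right fix is to note $\Delta_k$ acting on the function $b_2 g$ is what appears in the \emph{adapted} operator $\Delta^{b_2}_k$: indeed $[b_2 g]_{Q'}/[b_2]_{Q'} = E^{b_2}_{Q'} g$, so I should instead split the inner difference using $[b_2]$ in the denominator as well, i.e. write $\frac{[b_2 g]_{Q'}}{[b_1]_{Q'}} = \frac{[b_2]_{Q'}}{[b_1]_{Q'}}\,E^{b_2}_{Q'}g$ and then Leibniz-expand the difference over $Q,Q'$ into $\frac{[b_2]_{Q'}}{[b_1]_{Q'}}\Delta^{b_2}_Q g + (E^{b_2}_Q g)\,\Delta_Q\!\big(\tfrac{[b_2]}{[b_1]}\big)$. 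The first piece, after using $|[b_2]_{Q'}/[b_1]_{Q'}|\le C_2^{1/2}/\delta$ (valid since $Q\in\Omega_2$, so $[|b_2|^2]_{Q'}\le C_2$ and $|[b_1]_{Q'}|\ge\delta$), contributes $\le (CC_2/\delta^2)\sum_{Q\in\Omega_2}\|\Delta^{b_2}_Q g\|_2^2 \le CC_2\|g\|_{L^2(Q_2)}^2$ by Proposition~\ref{p8.8} (or Lemma~\ref{l8.14}). The second piece contributes $\le C\sum_Q\int_Q|E^{b_2}_Q g|^2\,|\Delta_Q(\tfrac{[b_2]}{[b_1]})|^2$, which is controlled by the maximal function $M^{b_2}g$ against the Carleson measure $\sum_Q|\Delta_Q(\tfrac{[b_2]}{[b_1]})|^2$; that measure is Carleson with constant $\le CC_2/\delta^2$ by the argument of Lemma~\ref{l8.15} applied to the bounded-oscillation function $[b_2]/[b_1]$ on the sawtooth (both $b_1,b_2$ are pseudo-accretive with the relevant $L^q$ bounds there, $q>2$, which is exactly where $q>2$ is used to get $[b_2]/[b_1]$ into $BMO$-type control), and $\|M^{b_2}g\|_2\le C\|g\|_2$ by maximal-function boundedness and pseudo-accretivity.

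The main obstacle is precisely this last point: getting the ``ratio of averages'' function $[b_2]/[b_1]$ — more honestly, the coefficient appearing in the Leibniz expansion — to satisfy a Carleson measure estimate with constant controlled by $C_2$ and the pseudo-accretivity constants, \emph{uniformly over the sawtooth} $\Omega_1\cap\Omega_2$ rather than over all dyadic cubes. This requires care because $b_1,b_2$ are only accretive on their respective sawtooths, and one must restrict attention to $\widetilde Q$'s lying in $\Omega_1\cap\Omega_2$, exactly as in the passage to $1_{\widetilde Q}b$ in Lemma~\ref{l8.15}; the telescoping/localization there must be redone for the product-type quantity, and it is here that the hypothesis $q>2$ (ensuring $\int_{\widetilde Q}|b_i|^q\le C_0|\widetilde Q|$ with $q>2$, hence the $L^2$ square-function sum of $\Delta_Q b_i$ is genuinely Carleson with the right constant) is essential. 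Everything else is a routine repetition of the computations in Lemmas~\ref{l8.14} and~\ref{l8.15}.
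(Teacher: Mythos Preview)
Your approach has a genuine gap. After the Leibniz decomposition, the first piece requires $\sum_{Q\in\Omega_2}\|\Delta^{b_2}_Q g\|_2^2\le C\|g\|_{L^2(Q_2)}^2$. Neither reference you cite gives this: Proposition~\ref{p8.8} assumes $b_2\in D\psi A$ globally, while Lemma~\ref{l8.14} is for $D^{b}_Q=(\Delta^b_Q)^{\tr}$, whose proof relies on the extra factor $b(x)$ in the kernel of $D^b_Q$ to produce the controllable $\int_{Q'}|b|^2\le C_0|Q'|$; the operator $\Delta^{b_2}_Q$ carries no such factor. In fact the bound you need is exactly the special case $b_1=b_2$ of the very lemma you are proving (cf.\ \eqref{eq8.42}), so the argument is circular. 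The second piece has the same defect: the Carleson estimate for the ratio differences is not the obstacle (expand the difference and apply Lemma~\ref{l8.15} to each of $b_1,b_2$ separately), but to pair it with $|E^{b_2}_Q g|^2$ you must either Carleson-embed $|[b_2 g]_Q|^2$, yielding $\|b_2 g\|_2^2$, or bound $|E^{b_2}_Q g|^2\le (C_2/\delta^2)[|g|^2]_Q$ and then invoke an $L^1$-type Carleson embedding $\sum_Q a_Q[f]_Q\le CA\|f\|_1$, which is false in general. Either way you end with $\|b_2 g\|_2^2$, and this is not dominated by $C_2\|g\|_2^2$ since $b_2$ is only locally in $L^q$, not in $L^\infty$.

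The paper's route is different and avoids $\Delta^{b_2}_Q$ entirely. One first proves the unweighted bound $\sum_{Q\in\Omega_1}\|\Lambda^{b_1}_Q h\|_2^2\le C\|h\|_2^2$ directly (this is your abandoned ``first attempt'', and it follows exactly as in Lemma~\ref{l8.14} from pseudo-accretivity of $b_1$ and Lemma~\ref{l8.15}). The key idea is then to decompose $b_2 g$ over the \emph{base} of the second sawtooth: with $F_2=Q_2\setminus(\cup_i P^2_i)$ one writes $b_2 g=b_2 g\,1_{F_2}+\sum_i(b_2 g)1_{P^2_i}$. On $F_2$, Lebesgue differentiation together with the uniform bound $[|b_2|^2]_Q\le C_2$ forces $\|b_2\|_{L^\infty(F_2)}^2\le C_2$, so $\|b_2 g\,1_{F_2}\|_2^2\le C_2\|g\|_2^2$ and the unweighted bound applies. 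For the second piece, the crucial geometric observation is that if $Q\in\Omega_2$ and $P^2_i\subseteq Q$ then $P^2_i\subsetneq Q$, so the kernel $\lambda^{b_1}_Q(x,\cdot)$ is \emph{constant} on $P^2_i$; hence one may replace $(b_2 g)1_{P^2_i}$ by $[b_2 g]_{P^2_i}1_{P^2_i}$, and this replacement function has $L^2$ norm $\le C\,C_2^{1/2}\|g\|_2$ by Cauchy--Schwarz and the $L^2$ control of $b_2$ on the dyadic parent of $P^2_i$ (which lies in $\Omega_2\cup\Omega_{2,\buffer}$). Note finally that only the $L^2$ averages of $b_2$ enter here; your remark that $q>2$ is essential for this particular lemma is not correct.
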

 
 We momentarily defer the proof of Lemma~\ref{l8.41}.
 
 Applying this lemma with $b_1=b_2$, $\Omega_1=\Omega_2$, we obtain
 \begin{equation}\label{eq8.42} \sum_{Q\in \Omega_1} \| \Delta^{b_1}_Q g\|^2_2 \leq C\| g\|^2_2.\end{equation}
 Thus,
 \begin{equation*}\sum_{Q\in \Omega_1\cap \Omega_2} \| T_{Q,3} b_2 \|^2_2 \leq \sum_{Q\in \Omega_1} \| \Delta_Q^{b_1} (1_{Q_2}T^{\tr} b_2)\|^2_2
 \leq C\int_{Q_2} |T^{\tr}b_2|^2\leq C|Q_2|,\end{equation*}
 as desired, where in the last step we have used hypothesis (ii) of Theorem~\ref{t6.6}.
 
 Let us  now prove Lemma~\ref{l8.41}. By \eqref{eq8.20}, and Lebesque's Differentiation Theorem, $b_2\in L^\infty (F_2)$, where $F_2\equiv Q_2\backslash (\cup P^2_i)$, with
 $$\| b_2 \|^2_{L^\infty (F_2)}\leq C_2.$$
 We decompose
 \begin{equation}\label{eq8.43} b_2 g=b_2g1_{F_2} +\sum_i (b_2g)1_{P^2_i}.\end{equation}
 By definition, for $Q\in \mathbb{D}_k$, and $x\in Q$,
 \begin{equation*} \Lambda^{b_1}_Q h(x) 
 =\frac{E_{k+1} h(x)}{E_{k+1} b_1 (x)}-\frac{E_k h(x)}{E_kb_1 (x)}
=\frac{\Delta_k h(x)}{E_{k+1}b_1(x)} -\frac{E_k h(x)\,\Delta_k b(x)}{E_{k+1} b_1 (x)\,E_kb_1(x)}.
\end{equation*}
 Since $Q\in \Omega_1$, we have that $|E_{k+1}b_1(x)|$, $|E_kb_1(x)|\geq \delta$, so by a familiar argument involving \eqref{eq8.2}, Carleson's Lemma and Lemma~\ref{l8.15}, we have that
 \begin{equation}\label{eq8.44} \sum_{Q\in \Omega_1}\| \Lambda^{b_1}_Q h\|^2_2\leq C\| h\|^2_2.\end{equation}
 Consequently
 \begin{equation*} \sum_{Q\in \Omega_2\cap \Omega_1} \| \Lambda^{b_1}_Q (b_2g1_{F_2})\|^2_2 
 \leq C\int_{F_2} |b_2g|^2\leq CC_2\| g\|^2_{L^2(Q_2)}.\end{equation*}
 To treat the second term in \eqref{eq8.43}, we note that if $P^2_i \subseteq Q\in \Omega_2$, then $P^2_i \subsetneq Q$, so that $\lambda^{b_1}_Q (x,y)$ is constant on $P^2_i$. Also,
 $$\int_{P^2_i} (b_2g-[b_2g]_{P^2_i})=0,$$ so therefore we may replace $\sum_i(b_2g)1_{P^2_i}$ by $\sum_i [b_2g]_{P^2_i} 1_{P^2_i}$. This leads to 
 \begin{multline*} \sum_{Q\in \Omega_2\cap \Omega_1} \left\| \Lambda_Q^{b_1} \left( \sum_i 1_{P^2_i} [b_2 g]_{P^2_i}\right) \right\|^2_{L^2}
  \leq C\left\| \sum_i1_{P^2_i}[b_2g]_{P^2_i}\right\|^2_{L^2(Q_2)}\\
 = C\sum_i |P^2_i|[b_2 g]^2_{P^2_i} \leq CC_2 \sum_i \int_{P^2_i}|g|^2,\end{multline*}
 where we have used \eqref{eq8.44} and then Cauchy-Schwarz and the estimate
 $$\frac{1}{|P^2_i|} \int_{P^2_i} |b_2|^2\leq \frac{C}{|2_DP^2_i|} \int_{2_DP^2_i} |b_2|^2 \leq CC_2 .$$
In turn, the latter bound holds because $2_DP^2_i$, the dyadic double of $P^2_i$, belongs to $\Omega_2\cup \Omega_{2,\buffer}$. This concludes the proof of Lemma~\ref{l8.41}, and hence also our treatment of the term $T_{Q,3}$ in \eqref{eq8.40}.
 
 Next, we consider the term $T_{Q,1}$ in \eqref{eq8.40}. By definition, for $Q\in \mathbb{D}_k$
$$1_QE_kb_2=1_Q [b_2]_Q.$$
Thus, since for any $g$, $\Delta_Q^{b_1}g$ is supported in $Q$, we have
\begin{equation*} T_{Q,1}b_2=\Delta_Q^{b_1}T^{\tr} \left(1_{Q^c}([b_2]_Q-E_kb_2)\right)
\equiv T'_{Q,1} b_2+T''_{Q,1}b_2,\end{equation*}  
 where
 $$T_{Q,1}'b_2=\Delta^{b_1}_Q T^{\tr} \left(1_{3Q\backslash Q} ([b_2]_Q -E_kb_2)\right),\,\, \quad
 T''_{Q,1}b_2=\Delta ^{b_1}_Q T^{\tr} \left(1_{(3Q)^c}([b_2]_Q-E_kb_2)\right).$$
 Now, for $Q\in \Omega_1$, $\Delta^{b_1}_Q : L^2(Q)\to L^2(Q)$. Moreover, $T^{\tr}:L^2(3Q\backslash Q) \to L^2(Q)$, by \eqref{eq6.13}. Thus
 \begin{equation*} \| T'_{Q,1} b_2\|_2 \leq C\| [b_2]_Q -E_kb_2\|^2_{L^2(3Q\backslash Q)}
 \leq C\sum^{3^n-1}_{m=1} \| \tilde{\Delta}^m_k b_2\|^2_{L^2(Q)},\end{equation*}
 where $\tilde{\Delta}^m_k$ is defined as follows. Given $Q\in \mathbb{D}_k$, we enumerate the $3^n-1$ cubes in $\mathbb{D}_k$ which are adjacent to $Q$ (i.e., which are contained in $3Q\backslash Q$), and we do this in some canonical fashion so that the enumeration does not depend upon $Q$, but only on position relative to $Q$. Then for any $x\in Q$, and for $Q^m$ one of these enumerated neighbors of $Q$, we set
 $$\tilde{\Delta}^m_kg(x)=[g]_Q -[g]_{Q^m}\equiv \tilde{\Delta} _Q^m g(x)$$
 We leave it to the reader to verify that for each $m=1,2,3,\dots 3^n-1$, we have the square function estimate
 $$\sum_{Q\,\,\text{dyadic}} \| \tilde{\Delta }^m_Q g\|^2_2 =\sum^\infty _{k=-\infty} \| \tilde{\Delta}^m_kg\|^2_2 \leq C\| g\|^2_2.$$
 Consequently,
 $$\sum_{Q\in \Omega_1\cap \Omega_2} \| T'_{Q,1} b_2\|^2_2\leq C\| b_2\|^2_2 =C\| b_2\|_{L^2(Q_2)}^2\leq C|Q_2|.$$
 
 We now turn to the term $T''_{Q,1}b_2$. Let $\psi_Q (x,z)$ denote the kernel of $\Delta ^{b_1}_Q T^{\tr}$. Since $\Delta^{b_1}_Q 1=0$, we have that for $Q\in \Omega_1$ and $z\in (3Q)^c$,
 \begin{multline*} |\psi _Q (x,z)|= \left| \int \varphi^{b_1}_Q (x,y)[K^{\tr} (y,z)-K^{\tr} (x,z) ]dy\right|\\
\leq C1_Q (x) 1_{(3Q)^c} (z)\frac{(\ell(Q))^\alpha}{|x-z|^{n+\alpha}} \frac{1}{|Q|} \int_Q |b_1|
\leq C1_Q \sum^\infty_{i=1} 2^{-i\alpha} (2^i\ell (Q))^{-n} 1_{2^iQ\backslash 2^{i-1} Q}(z),\end{multline*}
 so that
 \begin{equation}\label{eq8.45} |T''_{Q,1} b_2 |\leq C1_Q \sum^\infty _{i=1} 2^{-i\alpha} \frac{1}{|2^iQ|} \int_{2^iQ} |[b_2]_Q -E_kb_2|.\end{equation}
 We note that the concentric dilate $2^iQ$ is covered by a purely dimensional number of dyadic cubes of the same side length $2^i\ell (Q)=2^{i-k}$, namely the dyadic ancestor $(2_D)^iQ$ (here $2_DQ$ denotes the dyadic double of $Q$), along with its neighbors of the same generation $\mathbb{D}_{k-i}$. Enumerating these neighbors in the same canonical fashion as above (i.e., as in the definition of $\tilde{\Delta}^m_k$), we denote them by $Q^m(i)$, $1\leq m\leq 3^n-1$. We then write
 \begin{equation}\begin{split} \label{eq8.46} [b_2]_Q &= [b_2]_Q -[b_2]_{2_DQ} +[b_2]_{2_DQ} -[b_2]_{(2_D)^2Q} +\dots -[b_2]_{(2_D)^iQ} +[b_2]_{(2_D)^iQ} \\
 &= \sum^i_{\ell =1} \Delta_{k-\ell } b_2 (x)+[b_2]_{(2_D)^iQ},\end{split}\end{equation}
 for any $x\in Q$. Similarly,
 \begin{equation}\label{eq8.47} E_kb_2 = E_kb_2-E_{k-1} b_2 +\dots -E_{k-i} b_2+E_{k-i} b_2
 = \sum^i_{\ell =1} \Delta_{k-\ell } b_2+E_{k-i} b_2.\end{equation}
 By definition, $E_{k-i}b_2 (x)=[b_2]_{Q^m(i)}$, if $x\in Q^m(i)$, and $E_{k-i} b_2 (x)=[b_2]_{(2_D)^iQ}$, if $x\in (2_D)^iQ$. Thus, plugging \eqref{eq8.46} and \eqref{eq8.47} into \eqref{eq8.45}, we obtain that
 $$|T''_{Q,1} b_2|\leq C1_Q \sum^\infty_{i=1} 2^{-i\alpha} \left[ \sum^i_{\ell=1}\left(|\Delta_{k-\ell }b_2 |+M(\Delta_{k-\ell} b_2 )\right)+\sum^{3^n-1}_{m=1} |\tilde{\Delta}_{k-i}^m b_2|\right].$$
 Consequently,
 \begin{multline*}\left( \sum_{Q\in \Omega_1 \cap \Omega_2} \| T''_{Q,1} b_2\|^2_2\right)^{\frac{1}{2}}
 \leq C\sum^\infty_{i=1} 2^{-i\alpha} \sum^i_{\ell =1} \left( \sum_k \| \Delta_{k-\ell } b_2\|^2_2\right)^{\frac{1}{2}}\\
+ \,C\sum^{3^n-1}_{m=1} \sum^\infty _{i=1} 2^{-i\alpha} \left( \sum_k \| \tilde{\Delta}^m_{k-i} b_2\|^2_2 \right)^{\frac{1}{2}}
\leq C\| b_2\|_2\leq C|Q_2|^{\frac{1}{2}}.\end{multline*} This completes our treatment of $T_{Q,1}$ in \eqref{eq8.40}.
 
 It remains now to consider the term $T_{Q,2}$, and this will be a more delicate matter. We note that by \eqref{eq8.4} and the definition of $\Delta_j$,
 $$E_k -I=-\sum^\infty_{j=k}\Delta_j$$
 We therefore have that 
 \begin{multline} \label{eq8.48} T_{Q,2}b_2 = 
 -\Delta^{b_1}_Q T^{\tr} \left[ 1_{Q^c} \sum_{j\geq k} \Delta_j b_2\right]\\ +\,\left\{ \Delta ^{b_1}_Q T^{\tr} \left(1_Q ([b_2]_Q -b_2 )\right)-\Lambda^{b_1} _Q \left(( [b_2]_Q -b_2 )Tb_1\right)\right\} \\ 
 +\,\Lambda ^{b_1}_Q \left(([b_2]_Q-b_2 )Tb_1\right)
 \equiv\ \text{Error}_1 \,+ \,G_Q\,+\,\Phi_Q.\end{multline}
 where we have used that $1_Q E_kb_2 =1_Q [b_2]_Q$.
 
 We first turn our attention to $\text{Error}_1$. We fix
$$\delta \equiv C2^{-j\epsilon }2^{-k(1-\epsilon )},$$
 with $C$ a fixed large number and $\epsilon >0$ to be chosen. For each $\mu >0$, we let $Q_\mu$ denote the ``$\mu$-neighborhood of $Q$", i.e.
 $$Q_\mu \equiv \{ x:\dist (x,Q)<\mu \}.$$
 We also define the $\mu $-ring around $Q$ by
 $$R_\mu \equiv Q_\mu \backslash Q.$$
 We choose a smooth cut-off function $\eta_\delta \in C^\infty _0(Q_{2\delta})$, with $\eta _\delta \equiv 1$ on $Q_\delta$, $\| \nabla \eta_\delta \|_\infty \leq C/\delta$,  and $\supp \nabla \eta_\delta \subseteq R_{2\delta }\backslash R_\delta$. We write
 $$1_{Q^c}=1-\eta_\delta +\eta_\delta -1_Q.$$
 We treat the contribution of $1-\eta_\delta$ first; that is, we consider 
 $$ \text{Error}'_1\equiv -\sum_{j\geq k}\Delta _Q^{b_1} T^{\tr} \left((1-\eta _\delta )\Delta^2_jb_2\right),$$
 where we have used that $\Delta_j\equiv \Delta^2_j$. We denote by $h(y,v)$ the kernel of the operator 
$H=T^{\tr} (1-\eta_\delta) \Delta_j$; i.e.
$$h(y,v)=\int K^{\tr} (y,z)\left(1-\eta_\delta (z)\right)\varphi _j(z,v)dz,$$
where the kernel  $\varphi_j(z,v)$ of $\Delta _j$ satisfies $\int\varphi_j(z,v)dz=0$ and $\int |\varphi_j(z,v)|dz\leq C$ for every $v$. We set
$$K^{\tr}_\delta (y,z)=K^{\tr}(y,z)\left(1-\eta_\delta (z)\right).$$
Then for $y\in Q$, we have
\begin{equation*}\begin{split} |h(y,v)|&\leq \int_{|y-z|>c\delta ,\, |z-v|\leq C2^{-j}<<\delta} |K^{\tr}_\delta (y,z)- K^{\tr}_\delta (y,v)| \,|\varphi_j(z,v)|dz\\
&\leq C \frac{2^{-j\alpha}}{|y-v|^{n+\alpha} }1_{\{ |y-v|>c\delta \}} \,+\,C\frac{2^{-j}}{\delta} \frac{1}{|y-v|^n} 1_{\{ c\delta <|y-v|< C\ell (Q)\}}\\
&\equiv h'(y,v) +h''(y,v).\end{split}\end{equation*}
We define operators $H'$, $H''$ by
$$H'g(y)\equiv \int h'(y,v)g(v)dv,\quad H''g(y)\equiv \int h''(y,v) g(v)dv.$$
Recall that $j\geq k$ and that $\delta \equiv C2^{-j\epsilon }2^{-k(1-\epsilon )} =C2^{-j\epsilon}\ell (Q)^{1-\epsilon }$, so that
$$|h'(y,v)|\leq C2^{-(j-k)\alpha(1-\epsilon)}\frac{\delta^\alpha}{(\delta +|y-v|)^{n+\alpha}}.$$
Furthermore,
\begin{equation*}\begin{split} |H''g(y)|&\leq C2^{-(j-k) (1-\epsilon)} \delta^{-n}\int_{|y-v|\leq C\ell (Q)} |g|dv\\
&\leq C2^{-(j-k)(1-\epsilon )} \left( \frac{\ell (Q)}{\delta }\right)^n Mg(y)
= C2^{-(j-k)(1-\epsilon -\epsilon n)}Mg(y).\end{split}\end{equation*}
Combining these estimates, we have that for $\epsilon$ chosen small enough, depending only on $n$, that
$$Hg(y)\leq C2^{-(j-k)\beta } Mg(y),$$
for some $\beta >0$. Now, for $Q\in \Omega_1$, we have that $\Delta _Q^{b_1}: L^2(Q)\to L^2 (Q)$. Consequently, 
\begin{equation*}\| \Delta^{b_1}_Q T^{\tr} \left((1-\eta_\delta )\Delta ^2_j b_2\right)\|_2 
=\| \Delta^{b_1}_Q H\Delta_jb_2\|_2
\leq C2^{-(j-k)\beta} \| M\Delta _jb_2\|_{L^2(Q)}.\end{equation*}
Moreover, summing over $Q\in \mathbb{D}_h \cap \Omega_1\cap \Omega_2$, for each fixed $k$ we obtain
$$\sum_{Q\in \mathbb{D}_k\cap \Omega_1\cap \Omega_2} \| \Delta_Q^{b_1} H\Delta_jb_2\|^2_2\leq C2^{-2(j-k)\beta} \| M\Delta_j b_2\|^2_{L^2(\mathbb{R}^n)}.$$
Therefore, by a variant of Schur's Lemma, we obtain
$$\sum_{Q\in \Omega_1\cap \Omega_2} \| \error '_1 \| _2^2\leq C\| b_2\|^2_2\leq C|Q_2|,$$
as desired.

We now consider the rest of $\error_1$, namely, $$\error ''_1 \equiv -\sum_{j\geq k} \Delta_Q ^{b_1} T^{\tr} \left((\eta_\delta -1_Q)\Delta_j b_2\right).$$
By \eqref{eq6.13},  $T^{\tr} :L^p(6Q\backslash Q)\to L^p(Q)$, $1<p<\infty$. 
We choose $p$ so that $\frac{1}{p}+\frac{1}{q}=1$, where $q$ is the exponent in hypothesis (i) of Theorem~\ref{t6.6}. Then, by definition of $\Delta^{b_1}_Q$, we have that for $Q\in \Omega_1$,
\begin{equation*}\begin{split} |\Delta _Q^{b_1} T^{\tr} \left((\eta_\delta -1_Q)\Delta_j b_2 \right)|&\leq C\frac{1}{|Q|} \int_Q |b_1| \,|T^{\tr} \left((\eta_\delta -1_Q)\Delta_j b_2\right)|\\
&\leq C[|b_1|^q]_Q^{\frac{1}{q}} \left( \frac{1}{|Q|} \int_Q |T^{\tr} \left((\eta_\delta -1_Q)\Delta_j b_2\right)|^p\right) ^{\frac{1}{p}}\\
&\leq C\left( \frac{1}{|Q|} \int_{R_{2\delta}} |\Delta_jb_2|^p\right)^{\frac{1}{p}}\\
&\leq C\left( \frac{|R_{2\delta}|}{|Q|} \right)^{\frac{r-p}{pr}} \left( \frac{1}{|Q|} \int_{2 Q} |\Delta_jb_2|^r\right) ^{\frac{1}{r}}\\
&\leq C2^{-(j-k)\beta} \left(M(|\Delta_jb_2|^r)\right)^{\frac{1}{r}}(x),\end{split}\end{equation*}
for some $\beta >0$, and for all $x\in Q$, where we have used \eqref{eq8.20} in the third inequality, and where $p<r<2$. Thus,
$$\| \Delta_Q^{b_1} T^{\tr} \left((\eta_\delta -1_Q)\Delta_j b_2\right)\|_2 \leq C2^{-(j-k)\beta} \| \left(M(|\Delta_jb_2|^r)\right)^{\frac{1}{r}} \|_{L^2(Q)},$$
so as above we obtain via Schur's Lemma that
$$ \sum_{Q\in \Omega_1 \cap \Omega_2} \| \error ''_1 \|^2_2\leq C\| b_2 \|^2_2 \leq C|Q_2|.$$
This completes our treatment of $\error _1$.

Next, we discuss $\Phi_Q$ in \eqref{eq8.48}. Since $[b_2]_Q\leq C_2$, for all $Q\in \Omega_2$, we have by \eqref{eq8.44} that
\begin{equation*} \sum_{Q\in \Omega_1 \cap \Omega_2} \| \Lambda_Q^{b_1} \left([b_2]_Q Tb_1\right)\|^2_2\leq CC_2\int_{Q_2} |Tb_1|^2
\leq CC_2 |Q_2|,\end{equation*}
where in the last step we have used that the left hand side is zero unless $Q_2\in \Omega_1\cup \Omega_{\buffer}$, so that \eqref{eq8.22} applies to $Tb_1$ in $Q_2$. Moreover, the remaining part of $\Phi_Q$, namely $-\Lambda_Q^{b_1}(b_2Tb_1)$, may be handled similarly via Lemma~\ref{l8.41}. We omit the routine details.

It remains now to treat $G_Q$ in \eqref{eq8.48}. To this end, we set
$$g_Q\equiv 1_Q ([b_2]_Q-b_2),$$
so that $$G_Q= \Delta_Q^{b_1} T^{\tr} g_Q -\Lambda_Q^{b_1} (g_Q\,Tb_1).$$
Suppose that $Q\in \mathbb{D}_k$. We write
\begin{equation*}\begin{split} G_Q&=\left\{ \Delta_Q^{b_1} T^{\tr} E_{k+1} g_Q -\Lambda_Q^{b_1} (E_{k+1}g_Q\,Tb_1)\right\}\\& \qquad+\, \left\{ \Delta^{b_1}_Q T^{\tr} (g_Q-E_{k+1}g_Q)-\Lambda_Q^{b_1}\left((g_Q-E_{k+1}g_Q)Tb_1\right)\right\}\\
&\equiv G'_Q\,+\,\error_2.\end{split}\end{equation*}
We consider $G'_Q$ first. Since $Q\in \mathbb{D}_k$, we have that $E_kg_Q=0$. Thus,
$$E_{k+1}g_Q=(E_{k+1}-E_k)g_Q=\Delta_kg_Q=-\Delta_Qb_2,$$
because $g_Q$ is supported in $Q$, and $\Delta_Q 1=0$. We therefore have that
\begin{equation*}\begin{split} G'_Q &=-\Delta^{b_1}_Q T^{\tr} \Delta_Qb_2+\Lambda^{b_1}_Q \left((\Delta_Q b_2)(Tb_1)\right)\\
&=\I_Q+\II _Q,\end{split}\end{equation*}
and we treat these terms separately. Since $\Delta_Q^{b_1}f=\Lambda_Q^{b_1}(b_1f)=\langle \lambda_Q^{b_1}b_1,f\rangle$, we have that
\begin{equation*}\I_Q(x)= \langle \lambda_Q^{b_1} (x,\cdot )b_1,T^{\tr}(\Delta_Qb_2)\rangle
=\langle T(\lambda^{b_1}_Q (x,\cdot )b_1),\Delta_Qb_2\rangle .\end{equation*}
We recall that by definition
$$\lambda_Q^{b_1}(x,y)=\sum_{Q'} \frac{1}{[b_1]_{Q'}} \frac{1}{|Q'|}1_{Q'} (x)1_{Q'}(y)\,-\,\frac{1}{[b_1]_Q} \frac{1}{|Q|} 1_Q(x)1_Q(y),$$
where the sum runs over the children $Q'$ of $Q$. Thus, 
$$ |\I_Q(x)|\leq C1_Q(x)\left(|Q|^{-1}|\langle T(  1_Q b_1), \Delta_Q b_2\rangle |+\sum_{Q'}|Q'|^{-1} | \langle T(  1_{Q'}b_1) ,\Delta_Q b_2\rangle |\right),$$
where we have used that $Q\in \Omega_1$ to control $[b_1]_Q$ and $[b_1]_{Q'}$ from below (again, the sum runs over the children $Q'$ of $Q$).  But by Cauchy-Schwarz, Lemma~\ref{l8.33}, and \eqref{eq8.21} and \eqref{eq8.22}, this last expression is no longer that
$$C\left( \frac{1}{|Q|} \int_Q |\Delta_Q b_2|^2\right)^{\frac{1}{2}}.$$
Similarly, but more simply, the term $\II_Q(x)$ is dominated by
\begin{equation*}C\left( \frac{1}{|Q|}\int_Q |\Delta_Q b_2|^2\right)^{\frac{1}{2}} 
\left( \frac{1}{|Q|} \int_Q |Tb_1|^2\right)^{\frac{1}{2}}
 \leq C\left( \frac{1}{|Q|} \int_Q |\Delta_Qb_2|^2\right)^{\frac{1}{2}},\end{equation*}
by \eqref{eq8.22}. Altogether then,
$$ \sum_{Q\in \Omega_1\cap \Omega_2} \| G'_Q \|^2_2 \leq \sum_Q \| \Delta_Qb_2\|^2_2 \leq C\|b_2\|^2_2\leq C|Q_2|,$$ as desired.

Finally, we consider the term $\error_2$. For $Q\in \mathbb{D}_k$, the children $Q'$ of $Q$ belong to $\mathbb{D}_{k+1}$, so that for each such child $Q'$, $$\int_{Q'} (g_Q-E_{k+1}g_Q)=0.$$  We set $g'_Q\equiv g_Q-E_{k+1}g_Q$. Now $\Delta^{b_1}_Q f=\Lambda ^{b_1}_Q (b_1f)$, so that for $x\in Q$, we have
\begin{equation*}\begin{split} \error_2 (x)&=\Lambda^{b_1}_Q (b_1 T^{\tr} g'_Q)(x)-\Lambda^{b_1}_Q (g'_Q Tb_1)(x)\\
&= \langle \lambda^{b_1}_Q (x,\cdot ) b_1,T^{\tr} g'_Q\rangle -\langle \lambda_Q^{b_1} (x,\cdot ), g'_Q Tb_1\rangle\\
&= \langle T(\lambda^{b_1}_Q (x,\cdot )b_1),g'_Q\rangle -\langle Tb_1,\lambda_Q^{b_1} (x,\cdot )g'_Q\rangle \\
&=\sum_{Q'} \left(\langle T(\lambda^{b_1}_Q (x,\cdot )b_1),g'_Q1_{Q'}\rangle -\langle Tb_1,\lambda^{b_1}_Q (x,\cdot )g'_Q 1_{Q'}\rangle \right)\\
&=\sum_{Q'} \left(\langle T(\lambda_Q^{b_1}(x,\cdot )b_1) ,g'_Q 1_{Q'}\rangle -\langle T(1_{Q'}\lambda_Q^{b_1} (x,\cdot ) b_1),g'_Q 1_{Q'}\rangle \right),\end{split}\end{equation*} where the sum runs over the children $Q'$ of $Q$, and where, in the last step, we have used that $\lambda^{b_1}_Q (x,\cdot )$ is constant on each child $Q'$ of $Q$. Thus,
\begin{equation*}\begin{split} \error_2 '(x)&=\sum_{Q'} \left\langle 1_{(Q')^C}\lambda_Q^{b_1}(x,\cdot ) b_1,T^{\tr} (g'_Q1_{Q'})\right\rangle \\
&= \sum_{Q'} \Delta_Q^{b_1} \left(1_{Q\backslash Q'} T^{\tr} (g'_Q 1_{Q'} )\right)(x).\end{split}\end{equation*}
Now, by definition,
\begin{equation*}\begin{split} g'_Q 1_{Q'}&=(g_Q-E_{k+1}g_Q)1_{Q'}\\
&= \left\{1_Q ([b_2]_Q -b_2)-E_{k +1} \left(1_Q ([b_2] _Q-b_2)\right)\right\}1_{Q'}\\
&= (E_{k+1}b_2-b_2)1_{Q'},\end{split}\end{equation*}
since $1_{Q'} E_{k+1} (1_Q[b_2]_Q)=[b_2]_Q 1_{Q'}$, for each child of $Q'$ of $Q$. We expand
$$b_2=\sum_{j\geq k} \Delta_jb_2+E_kb_2,$$
and note that since $E_{k+1}E_k-E_k=0$, we have that
$$E_{k+1}b_2-b_2=\sum^\infty_{j=k}(E_{k+1}\Delta_jb_2-\Delta_jb_2).$$
Moreover,
$$E_{k+1}\Delta_j=E_{k+1}(E_{j+1}-E_j)=\begin{cases} E_{k+1}-E_{k+1}=0,&\text{if } j\geq k+1\\
E_{k+1}-E_k =\Delta_k, & \text{if } j=k.\end{cases}$$
Thus,
$$E_{k+1}b_2-b_2=-\sum^\infty_{j=k+1} \Delta_jb_2$$ and consequently,
$$\error _2=-\sum_{Q'\text{ children of } Q} \Delta^{b_1}_Q \left(1_{Q\backslash Q'} T^{\tr} \left(\sum^\infty_{j=k+1} \Delta_jb_21_{Q'}\right)\right).$$
Since $\Delta_j=\Delta_j^2$, it again suffices to show that, for some $\beta >0$, we have
\begin{equation}\label{eq8.49} \| \Delta^{b_1}_Q \left(1_{Q\backslash Q'} T^{\tr} (1_{Q'}\Delta_jh)\right)\|_2 \leq C2^{-\beta (j-k)}\|h\|_{L^2(Q')},\end{equation}
for every $j>k$ and each child $Q'$ of $Q$. Now, the kernel of $1_{Q'}\Delta_j$ is a sum
$$\sum_\ell \varphi_{Q^j_\ell}(z,v),$$
where $\ell(Q^j_\ell )=2^{-j}$, $Q^j_\ell \subseteq Q'$,
\begin{equation}\label{eq8.50} |\varphi_{Q^j_\ell }(z,v)|\leq \frac{C}{|Q^j_\ell |} 1_{Q^j_\ell} (z)1_{Q^j_\ell }(v),\end{equation}
and
\begin{equation}\label{eq8.51} \int\varphi_{Q^j_\ell}(z,v)dz=0,\end{equation}
for each fixed $v$. We split
$$1_{Q\backslash Q'} =1_{Q\backslash Q'_\delta} 
+1_{ R'_\delta \cap Q},$$
where as before $Q'_\delta$ is the $\delta$ neighborhood of $Q'$, and $R'_\delta = Q'_\delta \backslash Q'$. In the present situation, we choose $\delta=2^{-j/2}2^{-k/2}$. We let $J(y,v)$ denote the kernel of $T^{\tr} 1_{Q'}\Delta _j$, and observer that for $y\in Q\backslash Q'_\delta$, and by \eqref{eq8.50} and \eqref{eq8.51}, we have
\begin{equation*}\begin{split} |J(y,v)|&=\left| \sum_\ell 1_{Q^j_\ell } (v)\int \left(K^{\tr} (y,z)-K^{\tr} (y,v)\right) \varphi_{Q^j_\ell} (z,v)dz\right| \\
&\leq C\sum_\ell 1_{Q^j_\ell } (v)\int\frac{2^{-j\alpha}}{|y-v|^{n+\alpha}} \,|\varphi_{Q^j_\ell }(z,v)|\,dz\,1_{\{ |y-v|>C\delta\}}\\
&\leq C1_{Q'}(v)2^{-(j-k)\alpha/2} \frac{\delta^\alpha}{(\delta +|y-v|)^{n+\alpha}}.\end{split}\end{equation*}
Since $\Delta^{b_1}_Q:L^2(Q)\to L^2(Q)$, for $Q\in \Omega_1$, we have that \eqref{eq8.49} holds for the contribution of $1_{Q\backslash Q'_\delta}$.

It remains now only to treat the contribution of $1_{R'_\delta \cap Q}$. To this end, we recall that $\varphi_Q^{b_1}(x,y)$, the kernel of $\Delta^{b_1}_Q$, satisfies
$$|\varphi_Q^{b_1}(x,y)|\leq \frac{C}{|Q|} 1_Q(x)1_Q (y) b_1(y).$$
Then for $x\in Q\in \Omega_1\cap \mathbb{D}_k$, $q$ as in hypothesis (i) of Theorem~\ref{t6.6} (and also \eqref{eq8.20}), $\frac{1}{p}+\frac{1}{q}=1$, and $p<r<2$, we have
\begin{equation*}\begin{split} |\Delta_Q^{b_1}1_{R'_\delta \cap Q} T^{\tr} (1_{Q'} \Delta_jh)(x)|&\leq C\frac{1}{|Q|} \int_{R'_\delta \cap Q} |b_1|\, |T^{\tr}(1_{Q'}\Delta_jh)|\\
&\leq C\left( \frac{1}{|Q|} \int_Q |b_1|^q\right)^{\frac{1}{q}} \left( \frac{1}{|Q|} \int_{R'_\delta \cap Q} |T^{\tr} (1_{Q'} \Delta _jh)|^p \right)^{\frac{1}{p}}\\
&\leq C\left( \frac{|R'_\delta \cap Q|}{|Q|} \right)^{\frac{r-p}{rp}} \left( \frac{1}{|Q|} \int_{Q'_\delta \backslash Q'} |T^{\tr} (1_{Q'} \Delta_jh)|^r\right) ^{\frac{1}{r}}\\
&\leq C2^{-(j-k)\beta} \left( \frac{1}{|Q|} \int_{Q'} |\Delta_jh|^r\right)^{\frac{1}{r}},\end{split}\end{equation*}
for some $\beta >0$, where in the last step we have used the dual of the $L^r$ version of \eqref{eq6.13}. Since $Q'$ is a child of $Q$, the last expression is bounded by
$$C2^{-(j-k)\beta}\left(M(|1_{Q'}\Delta_jh|^r)\right)^{\frac{1}{r}} (x),$$
for every $x\in Q$. Since $1_{Q'} \Delta_j h=1_{Q'}\Delta_j(1_{Q'} h)$, for $j\geq k+1$, \eqref{eq8.49} follows.

\end{document}